\title{Derivations of Quantum and Involution Generalized Weyl Algebras}
\author{Andrew P. Kitchin}
\date{}
\newtheorem{theorem}{Theorem}[section]
\newtheorem{proposition}[theorem]{Proposition}
\newtheorem{lemma}[theorem]{Lemma}
\newtheorem{corollary}[theorem]{Corollary}
\theoremstyle{definition}
\newtheorem*{remark}{Remark}
\newtheorem*{acknowledgements}{Acknowledgements}
\theoremstyle{definition}
\newcommand{\Dmath}{\mathop{}\!\mathrm{D}}
\begin{document}
\maketitle
\begin{abstract} We classify the derivations of degree-one generalized Weyl algebras over a univariate Laurent polynomial ring. In particular, our results cover the Weyl-Hayashi algebra, a quantization of the first Weyl algebra arising as a primitive factor algebra of $U_q^+(\mathfrak{so}_5)$, and a family of algebras which localize to the group algebra of the infinite group with generators $x$ and $y$, subject to the relation $xy = y^{-1}x.$
\end{abstract}

\section{Introduction}
Let $\Bbbk$ be a field. For a $\Bbbk$-algebra $R$, a ($\Bbbk$-algebra) \textbf{derivation} is a $\Bbbk$-linear map $\textrm{D}:R\rightarrow R$ satisfying Leibniz's law
$$\textrm{D}(ab)=\textrm{D}(a)b+a\textrm{D}(b)$$
for all $a,b\in R.$ A derivation is called \textbf{inner} if for $b\in R$ we have $\textrm{D}(a)=ba-ab$ for all $a\in R$. We follow convention and use the term adjoint and notation $\mathrm{ad}_b(a)=ba-ab,$ when referring to inner derivations. A derivation is called \textbf{outer} if it cannot be expressed as the sum of inner derivations. The study of derivations in both commutative and noncommutative settings has been ubiquitous due to their structural importance and link to automorphisms, for example see \cite{Kharchenko}. 

This article is devoted to classifying the derivations of a family of generalized Weyl algebras over a univariate Laurent polynomial ring. We first recall the definition of a generalized Weyl algebra, as introduced in \cite{Bavula2}, highlighting the canonical forms over polynomial and Laurent polynomial rings, as well as notable examples for which the derivations have previously been studied. 

For a $\Bbbk$-algebra $R$, a ($\Bbbk$-algebra) automorphism $\sigma$ of $R$, and a central element of $R$, say $a$, the degree-one \textbf{generalized Weyl algebra} $R(\sigma, a)$ is the algebra extension of $R$ by the two indeterminates $x$ and $y$ subject to the relations
$$yx=a,~~xy=\sigma(a),~~xr=\sigma(r)x,~~\textrm{and}~~ yr=\sigma^{-1}(r)y~~\textrm{for all}~r\in R.$$
Consider the case where $R=\Bbbk[h]$. Every automorphism, say $\sigma$, of $\Bbbk[h]$ has the action $\sigma(h)=q h+\beta$, where $q\in\Bbbk^*$ and $\beta\in\Bbbk$. The following classification of the canonical forms of $\Bbbk[h](\sigma,a)$ appeared in \cite{Richard1}.
\begin{proposition}\label{prop_richard_solotar}(Richard, Solotar \cite[Proposition 2.1.1]{Richard1})
The generalized Weyl algebra $\Bbbk[h](\sigma,a)$ is isomorphic to one, and only one, of the following
\begin{enumerate}
\item $\Bbbk[h](\mathrm{id},a)$,
\item $\Bbbk[h](\sigma_{cl},a)$ where $\sigma_{cl}(h)=h+1$,
\item $\Bbbk[h](\sigma_q,a)$ where $\sigma_q(h)=qh$ and $q\neq 1$.
\end{enumerate}
\end{proposition}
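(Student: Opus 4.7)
The plan rests on two observations. First, every $\Bbbk$-algebra automorphism of $\Bbbk[h]$ has the form $\sigma(h) = qh + \beta$ with $q \in \Bbbk^*$ and $\beta \in \Bbbk$, since $\sigma(h)$ must generate $\Bbbk[h]$ as a $\Bbbk$-algebra and is therefore of degree one. This leaves three qualitative types: the identity ($q=1$, $\beta=0$), a nontrivial translation ($q=1$, $\beta \neq 0$), and an affine map with $q \neq 1$ (which has the unique fixed point $\beta/(1-q)$). Second, conjugating $\sigma$ by any $\Bbbk$-algebra automorphism $\varphi$ of $R = \Bbbk[h]$ produces an isomorphic generalized Weyl algebra: the assignment $h \mapsto \varphi(h)$, $x \mapsto x$, $y \mapsto y$ extends to an isomorphism $R(\sigma, a) \to R(\varphi \sigma \varphi^{-1}, \varphi(a))$, which one verifies directly on the four defining relations.

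With these tools I would realize each canonical form by choosing $\varphi$ appropriately. In the translation case, taking $\varphi(h) = h/\beta$ conjugates $\sigma$ to $\sigma_{cl}(h) = h+1$. In the case $q \neq 1$, the translation $\varphi(h) = h + \beta/(q-1)$ moves the fixed point of $\sigma$ to the origin of the new coordinate and conjugates $\sigma$ to $\sigma_q(h) = qh$. This establishes existence.

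For uniqueness, the three families must be shown pairwise non-isomorphic. The identity case is separated immediately by commutativity, since $\Bbbk[h](\mathrm{id}, a)$ is commutative while in the other two cases $xh - hx = (\sigma(h) - h)x$ is a nonzero multiple of $x$. To separate cases (2) and (3), the natural invariant comes from the commutator $[h,x] = (\sigma(h) - h)x$: in (2) it equals $\beta x$, which is linear in the generators, whereas in (3) it equals $(q-1)hx$, which is genuinely quadratic. I expect the main obstacle to be extracting this distinction as an \emph{intrinsic} invariant of the algebra, since the defining datum $a$ may differ between two a priori isomorphic GWAs. A clean approach is to use the natural $\mathbb{Z}$-grading $\deg(x) = 1$, $\deg(y) = -1$, $\deg(h) = 0$, and examine the spectrum of $\mathrm{ad}_h$ on each graded piece: its eigenvalues recover the action of $\sigma$ up to conjugacy in the automorphism group of $R$, and so distinguish the three cases.
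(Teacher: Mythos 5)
You should first note that the paper itself gives no proof of this proposition: it is quoted verbatim from Richard and Solotar \cite[Proposition 2.1.1]{Richard1}, and the only in-house analogue is the proof of the Laurent-polynomial version, Proposition \ref{prop_canonical_forms4}. Measured on its own terms, the existence half of your argument is correct and is the standard one: the assignment $h\mapsto\varphi(h)$, $x\mapsto x$, $y\mapsto y$ does give an isomorphism $R(\sigma,a)\to R(\varphi\sigma\varphi^{-1},\varphi(a))$, and an affine change of variable normalizes $\sigma$. (Watch the direction of conjugation, though: with the convention $\varphi\sigma\varphi^{-1}$ one computes $(\varphi\sigma\varphi^{-1})(h)=h+\beta/c$ for $\varphi(h)=ch$ and $(\varphi\sigma\varphi^{-1})(h)=qh+(q-1)c+\beta$ for $\varphi(h)=h+c$, so you need $\varphi(h)=\beta h$ and $\varphi(h)=h+\beta/(1-q)$, the inverses of the maps you wrote; your choices yield $h+\beta^2$ and $qh+2\beta$.) Separating family 1 by commutativity is also fine.

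The genuine gap is exactly where you anticipated it: separating families 2 and 3. The $\mathbb{Z}$-grading and the element $h$ belong to a chosen presentation, not to the algebra, so an isomorphism $\Bbbk[h](\sigma_{cl},a)\to\Bbbk[h](\sigma_q,b)$ need not respect the grading nor send $h$ to $h$; ``the spectrum of $\mathrm{ad}_h$ on each graded piece'' is therefore not an invariant you are entitled to compare until you prove it is independent of these choices, and you do not. Moreover the invariant does not behave as claimed: on $A_n=\Bbbk[h]x^n$ one has $\mathrm{ad}_h(px^n)=(h-\sigma^n(h))px^n$, which in the classical case is the scalar $-n$ but in the quantum case is multiplication by $(1-q^n)h$, an operator with \emph{no} eigenvectors, so the eigenvalues do not ``recover the action of $\sigma$ up to conjugacy.'' One could try to salvage this by showing that the classical algebras admit an element whose adjoint action is diagonalizable with nonzero integer eigenvalues while the quantum ones do not, but that is precisely the nontrivial content still to be supplied. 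The paper's proof of the Laurent analogue sidesteps all of this by comparing presentation-independent invariants (commutativity, the center, the group of units and central units); for the polynomial case one likewise needs an intrinsic invariant --- for instance the first Hochschild cohomology, which vanishes for family 2 by \cite{Farinati1}, or the Bavula--Jordan style analysis of \cite{Bavula1} forcing any isomorphism to preserve the subalgebra $\Bbbk[h]$ --- rather than data read off from the defining generators.
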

\noindent
The notation $\sigma_{cl}$ and $\sigma_{q}$ reflects the common use of the words \textbf{classical} and \textbf{quantum} to describe families 2 and 3 respectively. 

In \cite{Dixmier} Dixmier proved that every derivation of the first Weyl algebra is inner. The Weyl algebra is isomorphic to the classical generalized Weyl algebra $\Bbbk[h](\sigma_{cl}, h)$. In \cite{Farinati1} it was shown that the first Hochschild cohomology group of $\Bbbk[h](\sigma_{cl},a)$ is zero. Recalling that the first Hochschild cohomology group of an algebra, say $A$, can be interpreted as the Lie algebra of derivations of $A$ modulo the inner derivations, one can conclude that Dixmier's result on the derivations of the Weyl algebra is true for $\Bbbk[h](\sigma_{cl},a)$, irrespective of the choice of $a$. 

The derivations of the quantum plane, the $\Bbbk$-algebra generated by the symbols $x$ and $y$ subject to the relations $xy = qyx$, were classified in \cite{Alev2}. The quantum plane can be realized as the quantum generalized weyl algebra $\Bbbk[h](\sigma_{q},h)$. 

In this article we consider the family of generalized Weyl algebras where $R=\Bbbk[h^{\pm 1}]$. Every automorphism, say $\sigma$, of $\Bbbk[h^{\pm 1}]$ has the action $\sigma(h)=q h^{\pm 1}$, where $q\in\Bbbk^*$. We continue refer to the family $\Bbbk[h^{\pm 1}](\sigma,a)$, where $\sigma(h)=qh$, as quantum and refer to the family where $\sigma(h)=qh^{-1}$ as \textbf{involution} generalized Weyl algebras. We will prove the following Laurent polynomial analogue of Proposition \ref{prop_richard_solotar}.
\begin{proposition}\label{prop_canonical_forms}
The generalized Weyl algebra $\Bbbk[h^{\pm 1}](\sigma, a)$ is isomorphic to one, and only one, of the following
\begin{enumerate}
\item $\Bbbk[h^{\pm 1}](\mathrm{id},a)$,
\item $\Bbbk[h^{\pm 1}](\sigma_q,a)$ where $\sigma_q(h)=qh$ and $q\neq 1$,
\item $\Bbbk[h^{\pm 1}](\sigma_{inv},a)$ where $\sigma_{inv}(h)=qh^{-1}$.
\end{enumerate}
\end{proposition}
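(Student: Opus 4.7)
The plan is to split the proof into an existence part (every $\Bbbk[h^{\pm 1}](\sigma,a)$ fits into at least one of the three families) and a uniqueness part (no such algebra belongs to two of them).

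The existence part reduces to classifying $\mathrm{Aut}_{\Bbbk}(\Bbbk[h^{\pm 1}])$. Any such $\sigma$ must send the unit $h$ to another unit, and since the unit group of a Laurent polynomial ring over a field is $\Bbbk^{*} \cdot h^{\mathbb{Z}}$, I have $\sigma(h) = qh^{n}$ for some $q \in \Bbbk^{*}$ and $n \in \mathbb{Z}$. Requiring $\sigma$ to be surjective forces $h$ to lie in the $\Bbbk$-subalgebra generated by $qh^{n}$, which pins $n$ to $\pm 1$. Thus $\sigma$ is already of one of the three stated forms, and existence follows immediately with no further change of coordinates. I would also record, using the substitution $h \mapsto \alpha h$, that the scalar $q$ cannot be normalized in general: it is unchanged in family~2 and only rescales as $\alpha^{2}q$ in family~3, which is why the canonical forms retain $q$ as a free parameter.

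For uniqueness, family~1 is cleanly separated from families~2 and~3 by commutativity. Directly from the defining relation $xh = \sigma(h)x$ one obtains $[x,h] = (\sigma(h)-h)x$, which vanishes (together with its analogue $[x,r]$ for all $r \in R$) precisely when $\sigma = \mathrm{id}$. So $\Bbbk[h^{\pm 1}](\mathrm{id},a)$ is commutative while the other two families are not.

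The main difficulty is then distinguishing families~2 and~3. My plan is to use the fixed subring $R^{\sigma}$ as a structural invariant. In family~3, $\sigma_{inv}^{2} = \mathrm{id}$ and a short calculation gives $R^{\sigma_{inv}} = \Bbbk[h+qh^{-1}]$, a polynomial ring in one variable. In family~2, $R^{\sigma_{q}}$ is either $\Bbbk$ (if $q$ is not a root of unity) or a Laurent polynomial ring $\Bbbk[h^{\pm d}]$ (if $q$ has finite order $d$)---never a genuine polynomial ring---so these fixed subrings are non-isomorphic as $\Bbbk$-algebras. To promote this to an invariant of $\Bbbk[h^{\pm 1}](\sigma,a)$ itself, I would identify $R^{\sigma}$ with the centralizer of $x$ inside $R$, since $xr = rx$ if and only if $\sigma(r)=r$. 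The hard part will be arguing that the copy of $R$ sitting inside $\Bbbk[h^{\pm 1}](\sigma,a)$ is canonically determined by the algebra---for instance, as the degree-zero component of the natural $\mathbb{Z}$-grading once one shows the grading is essentially intrinsic, or as the centralizer of a suitably generic element---and this intrinsic-recognition step is where I expect the most care to be needed.
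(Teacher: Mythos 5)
Your treatment of the existence part and of the separation of family~1 (commutativity) matches the paper's, and both are fine. The genuine gap is in the step you yourself flag: distinguishing families~2 and~3 via the fixed subring $R^{\sigma}$ requires knowing that the copy of $R=\Bbbk[h^{\pm 1}]$ (and the element $x$ whose centralizer you take) is carried to the corresponding copy by any isomorphism, and nothing in your sketch establishes this. The degree-zero component of a $\mathbb{Z}$-grading is not an isomorphism invariant in general (these algebras admit no preferred grading a priori), and ``the centralizer of $x$ inside $R$'' presupposes that both $x$ and $R$ have already been recognized intrinsically --- which is precisely the point at issue. Moreover, in the monomial cases the unit group is large (units of the form $\alpha h^i x^j$ by Corollary~\ref{Units1265}), so even the natural candidate ``subalgebra generated by the units'' fails to recover $R$ uniformly. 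As written, the proposal does not separate families~2 and~3.

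The paper avoids this difficulty entirely by working with invariants that are automatically preserved by any isomorphism: the center and the central units. Concretely, when $q$ is not a root of unity the quantum algebra has trivial center while the involution algebra does not (Lemmas~\ref{center3423} and~\ref{fsddf5367}); when $q$ is a $t^{\mathrm{th}}$ root of unity, $h^t$ is a nontrivial central unit of $\Bbbk[h^{\pm 1}](\sigma_q,a)$, whereas $\Bbbk[h^{\pm 1}](\sigma_{inv},b)$ has no nontrivial central units for $b$ not a monomial; and the remaining monomial cases are killed by classifying the units and testing a putative isomorphism against the relation $xh=qh^{-1}x$. If you want to rescue your fixed-subring idea, the workable version is to intersect $R$ with the center --- i.e., compare $Z(A)$ directly, since $R^{\sigma}$ essentially reappears inside $Z(A)$ --- but that is then the paper's argument in different clothing, and you would still need the unit-group computations to handle the root-of-unity and monomial cases.
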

\noindent
For reasons of presentation, we will routinely switch between the notations $a$ and $a(h)$ to denote the defining polynomial of $\Bbbk[h^{\pm 1}](\sigma,a)$.

In \cite{Launois}, primitive factor algebras of Gelfand-Kirillov dimension 2 of the positive part of the quantized enveloping algebra $U_q(\mathfrak{so}_5)$ were classified. Among those are the algebras $\mathscr{A}_{\alpha,q}$ with $\alpha\in\Bbbk^*$, where $\mathscr{A}_{\alpha,q}$ is the associative algebra in three variables $e_1, e_2,$ and $e_3$ subject to the relations
\begin{align*}
& e_1 e_3=q^{-1}e_3 e_1,\\
& e_2 e_3=q e_3 e_2+\alpha,\\
& e_2e_1=q^{-1} e_1 e_2 -q^{-1}e_3,\\
& e_3^2+(q^2-1) e_3 e_1 e_2 +\alpha q(q+1) e_1=0.
\end{align*}
By setting $q=1$ and $\alpha=1$, we get an algebra isomorphic to the first Weyl algebra. In \cite{Launois} these algebras are denoted $A_{\alpha,0}$, and for simplicity, we replace $q^2$ with $q$. When $q\neq 1$ the algebra $\mathscr{A}_{\alpha,q}$ is isomorphic to $\Bbbk[h^{\pm 1}](\sigma_q,h-1).$ 

Let $\mathscr{H}_{q}^t$ denote the associative algebra with generators $\Omega, \Omega^{-1}, \Psi$, and $\Psi^{\dag}$ subject to the relations 
\begin{align}
& \Omega\Omega^{-1}=\Omega^{-1}\Omega=1,\nonumber\\
& \Psi \Omega=q\Omega\Psi,\nonumber\\ &\Psi^{\dag} \Omega=q^{-1} \Omega \Psi^{\dag},\nonumber\\
&\Psi \Psi^{\dag}=\frac{q^t \Omega^t-q^{-t}\Omega^{-t}}{q^t-q^{-t}},\nonumber\\
&\Psi^{\dag}\Psi=\frac{ \Omega^t-\Omega^{-t}}{q^t-q^{-t}}.\nonumber
\end{align}
By setting $t=1$, we retrieve the Weyl Hayashi algebra $\mathscr{H}_{q}^1$ which was studied in \cite{Alev} and \cite{Kirkman}. When $t=2$ we get the original algebras introduced by Hayashi in \cite{Hayashi}. In \cite{Kirkman} it was shown that the algebras $\mathscr{H}_{q}^1$ arise as factor algebras of a $q$-analogue of the universal enveloping algebra of the Heisenberg Lie algebra. It was also shown in \cite{Launois} that $\mathscr{H}_{q}^1$ appears as factor algebras of the positive part of the quantized enveloping algebra $U_q(\mathfrak{so}_5)$. The algebras $\mathscr{H}_{q}^1$ and $\mathscr{H}_{q}^2$ are isomorphic to $\Bbbk[h^{\pm 1}](\sigma_q,h^2-1)$ and $\Bbbk[h^{\pm 1}](\sigma_q,h^4-1)$ respectively.   

For the quantum torus, $\Bbbk[h^{\pm 1}](\sigma_{q},h)$, an important localisation of the of the quantum plane, the derivations were classified as part a more general family of twisted group algebras \cite{Osborn}. We leverage this classification in Section \ref{DQGWA} to prove the following.
\begin{proposition}\label{Prop_main_1}
Let $q$ be a non-root of unity and $a(h)$ not a monomial.
\begin{enumerate}
    \item Every derivation of $\Bbbk[h^{\pm 1}](\sigma_q,a)$ can be uniquely written as
$\mathrm{ad}_t+\delta_{\alpha},$
where $t\in\Bbbk[h^{\pm 1}](\sigma_q,a)$ and $\delta_{\alpha}$ has action $$\delta_{\alpha}(h)=0,~~\delta_{\alpha}(x)=\alpha x,~~\mathrm{and}~~\delta_{\alpha}(y)=-\alpha y,$$
where $\alpha \in \Bbbk^*$.

\item The first Hochschild cohomology $\mathrm{HH}^1(\Bbbk[h^{\pm 1}](\sigma_q,a))=\Bbbk$. 
\end{enumerate}
\end{proposition}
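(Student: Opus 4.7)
The plan is to realise $A := \Bbbk[h^{\pm 1}](\sigma_q,a)$ as a subalgebra of a quantum torus via localisation, lift each derivation of $A$ to the torus, invoke Osborn's classification \cite{Osborn}, and then descend the resulting decomposition back to $A$ using the $x$-grading.

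First I would verify that $\{x^n : n \geq 0\}$ is a two-sided Ore set in $A$, so the localisation $A[x^{-1}]$ exists. Using $y = a(h)x^{-1}$ in this localisation, one identifies $A[x^{-1}]$ with the quantum torus $\mathbb{T} := \Bbbk\langle h^{\pm 1}, x^{\pm 1}\rangle/(xh - qhx)$. Every derivation $D$ of $A$ extends uniquely to a derivation $\widetilde{D}$ of $\mathbb{T}$ via the quotient rule $\widetilde{D}(x^{-1}) = -x^{-1}D(x)x^{-1}$. Since $q$ is a non-root of unity, Osborn's classification \cite{Osborn} gives $\widetilde{D} = \mathrm{ad}_t + \delta_{\alpha,\beta}$, where $t \in \mathbb{T}$ is unique modulo $Z(\mathbb{T}) = \Bbbk$, and $\delta_{\alpha,\beta}$ is the outer derivation with $\delta_{\alpha,\beta}(h) = \beta h$ and $\delta_{\alpha,\beta}(x) = \alpha x$.

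The crux is to show $\beta = 0$ and that $t$ can be taken in $A$. As a graded subalgebra of $\mathbb{T}$, the negative-degree piece of $A$ is $A_{-n} = P_n \cdot \Bbbk[h^{\pm 1}] x^{-n}$, where $P_n = a(h)\sigma^{-1}(a)\cdots\sigma^{-(n-1)}(a)$ arises from the normal form $y^n = P_n x^{-n}$. Computing $\delta_{\alpha,\beta}(y) = (\beta h a'(h) - \alpha a(h))x^{-1}$ and requiring this to lie in $A_{-1}$ forces $a(h) \mid \beta h a'(h)$; since $a$ is not a monomial, the logarithmic derivative $h a'/a$ is not a Laurent polynomial, so $\beta = 0$. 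Expanding $t = \sum_i t_i x^i$ in $\mathbb{T}$ and computing $[t,h] = \sum_i (q^i - 1) h \cdot t_i x^i$, the condition $[t,h] \in A$ requires $(q^{-n}-1)h\, t_{-n}$ to be divisible by $P_n$ in $\Bbbk[h^{\pm 1}]$ for each $n \geq 1$. Since $q$ is not a root of unity, $(q^{-n}-1)h$ is a unit in $\Bbbk[h^{\pm 1}]$, forcing $t_{-n}x^{-n} \in A_{-n}$ and hence $t \in A$. Uniqueness of the decomposition, and part (2), then follow from the centre computation $Z(A) = \Bbbk$.

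I anticipate the main obstacle to be the divisibility bookkeeping in the $t \in A$ step, where the graded pieces of $A$ inside $\mathbb{T}$ must be tracked carefully. Both hypotheses enter essentially: the non-root-of-unity condition on $q$ gives $q^i - 1 \neq 0$ for all $i \neq 0$, used in both the $t \in A$ argument and the centre computation, while the non-monomiality of $a$ is precisely what forces $\beta = 0$ in the outer part.
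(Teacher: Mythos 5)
Your proposal is correct and follows the same overall architecture as the paper's proof of Proposition \ref{K1}: localise at the powers of $x$ (Lemma \ref{loc12}), extend the derivation to the quantum torus, apply Theorem \ref{prop_passman_osbourn}, and descend. The one step where you genuinely diverge is the proof that $t$ lies in $\Bbbk[h^{\pm1}](\sigma_q,a)$. The paper applies $\mathrm{ad}_{t_-}$ to $h^j$ for $j=1,\dots,n$ and inverts the $n\times n$ matrix with entries $q^{ij}-1$ via a root-counting argument; you instead take the single commutator $[t,h]$, isolate its degree $-n$ graded component $(q^{-n}-1)h\,t_{-n}x^{-n}$, and use the explicit description $A_{-n}=\Bbbk[h^{\pm1}]\,a(h)\sigma^{-1}(a)\cdots\sigma^{-(n-1)}(a)\,x^{-n}$ to obtain the required divisibility at once, since $(q^{-n}-1)h$ is a unit. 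This is more elementary and shows the matrix computation is not needed once one observes that membership in the subalgebra can be tested one graded component at a time. Your argument that $\beta=0$ is the same computation as the paper's comparison of coefficients in Equation (\ref{L3}), merely phrased as divisibility of $\beta h a'(h)$ by $a(h)$; note that both versions implicitly require $ha'(h)\neq 0$ for non-monomial $a$, which is automatic in characteristic $0$ but can fail in positive characteristic (e.g.\ $a=h^p+1$ in characteristic $p$), a caveat you share with the paper rather than introduce. The uniqueness claim and the identification of $\mathrm{HH}^1$ via the triviality of the centres of the torus and of $\Bbbk[h^{\pm1}](\sigma_q,a)$ are fine.
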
 

We dedicate Section \ref{IGWA1} and Section \ref{IGWA2} to studying involution generalized Weyl algebras. In Section \ref{IGWA1} we classify the units and center, as well as solve the isomorphism problem for these algebras. Using these results we prove Proposition \ref{prop_canonical_forms}. In Section \ref{IGWA2}, we arrive at the following classification of the derivations for this family.
\begin{proposition}\label{Prop_main_2}
Assume $\Bbbk$ does not have characteristic $2$ and $q$ has a second root in $\Bbbk$.
\begin{enumerate} 
    \item Every derivation of $\Bbbk[h^{\pm 1}](\sigma_{inv},a)$ can be uniquely written as $\mathrm{ad}_t+\delta_{z_1,z_2},$ where $t\in\Bbbk[h^{\pm 1}](\sigma_q,a)$ and $\delta_{z_1,z_2}$ has action
$$\delta_{z_1,z_2}(h)=(h^2-1)z_1,~~\delta_{z_1,z_2}(x)=z_2 x,~~\mathrm{and}~~\delta_{z_1,z_2}(y)=-z_2 y,$$
where $z_1,z_2 \in Z(\Bbbk[h^{\pm 1}](\sigma_{inv},a))$.
\item When $a(h)$ is not a monomial $z_1=0$. 
\end{enumerate}
\end{proposition}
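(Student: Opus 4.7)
The plan is to exploit the natural $\mathbb{Z}$-grading on $A := \Bbbk[h^{\pm 1}](\sigma_{inv}, a)$ given by $\deg h = 0$, $\deg x = 1$, $\deg y = -1$, so that $A = \bigoplus_n A_n$ with $A_n = \Bbbk[h^{\pm 1}]x^n$ for $n \ge 0$ and $A_n = \Bbbk[h^{\pm 1}]y^{-n}$ for $n < 0$. Since $\sigma_{inv}$ has order two and $q$ admits a square root in $\Bbbk$, the substitution $\tilde h = h/\sqrt{q}$ first reduces matters to the cleaner involution $\sigma_{inv}(\tilde h) = \tilde h^{-1}$, removing $q$-twists from the subsequent compatibility calculations. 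Any derivation $D$ decomposes as $D = \sum_n D_n$ into homogeneous components; the center $Z(A)$ is itself $\mathbb{Z}$-graded; and the proposed family $\delta_{z_1,z_2}$ decomposes in parallel, so one can argue grade-by-grade.

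For each homogeneous $D_n$, I would write $D_n(h) \in A_n$, $D_n(x) \in A_{n+1}$, and $D_n(y) \in A_{n-1}$ in explicit coordinates and substitute into the defining relations $xh = h^{-1}x$, $yh = h^{-1}y$, and $yx = a(h)$. Applying Leibniz to $xh = h^{-1}x$ and using $\operatorname{char}\Bbbk \neq 2$ forces the coefficient of $D_n(h)$ to split as a multiple of $(h^2-1)$ (contributing to $z_1^{(n)}$) plus a commutator $[t_n, h]$ for some $t_n \in A_n$. Subtracting $\mathrm{ad}_{t_n}$ and running the parallel reduction on $D_n(x)$ and $D_n(y)$ extracts a central scalar $z_2^{(n)} \in Z(A) \cap A_n$. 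Assembling over $n$ produces $D = \mathrm{ad}_t + \delta_{z_1,z_2}$. Uniqueness is verified by the standard argument: if $\mathrm{ad}_t + \delta_{z_1,z_2} = 0$, evaluating on $h$, $x$, and $y$ and using the grade-by-grade injectivity of multiplication by $(h^2-1)$ and by $x$ on each $A_n$ forces $z_1 = z_2 = 0$ and $t \in Z(A)$.

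For part (2), applying $\delta_{z_1,z_2}$ to the relation $yx = a(h)$ and using centrality of $z_2$ gives
\[
a'(h)(h^2-1)\,z_1 = \delta_{z_1,z_2}(a(h)) = \delta_{z_1,z_2}(y)\,x + y\,\delta_{z_1,z_2}(x) = -z_2\,a(h) + z_2\,a(h) = 0.
\]
When $a(h)$ is not a monomial, $a'(h)(h^2-1)$ is a nonzero element of the integral domain $\Bbbk[h^{\pm 1}]$, and multiplication by it is injective on each graded component $A_n$; a grade-by-grade argument then forces $z_1 = 0$. The main obstacle, I expect, will be the graded reduction in the second paragraph: explicitly constructing $t_n$ in each degree and verifying that the residual parameters lift to genuine central elements of $A$. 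This step is most delicate in low degree, where the interaction between $(h^2-1)$ and $a(h)$ is intricate, and it depends on the parity of $n$, since $Z(A) \cap A_n$ behaves very differently for even versus odd $n$ (the latter typically vanishing when $a(h)$ is not $\sigma_{inv}$-invariant).
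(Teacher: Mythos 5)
Your strategy is sound and would yield a correct proof, but it is architecturally different from the paper's. The paper splits the work in two: it first classifies derivations of $\Bbbk[h^{\pm 1}](\sigma_{inv},1)$ by exactly the kind of computation you describe (write $\mathrm{D}(h)$, $\mathrm{D}(x)$, $\mathrm{D}(y)$ in graded coordinates, compare coefficients against the defining relations, and use the characteristic-$\neq 2$ divisibility lemmas for $(h^{\pm 1}-h^{\mp1})$ and $(h^2-1)$), and then handles general $a$ by localizing at the Ore set generated by $x$, so that every derivation of $\Bbbk[h^{\pm 1}](\sigma_{inv},a)$ extends uniquely to the case $a=1$ and the classification descends. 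You instead propose to run the graded coefficient analysis directly on $\Bbbk[h^{\pm 1}](\sigma_{inv},a)$, organized by homogeneous components $D_n$. The computational core is the same (including the parity dichotomy you correctly anticipate: even-degree components of $\mathrm{D}(h)$ produce the $(h^2-1)z_1$ term, odd-degree ones are commutators; and the relation $xh=h^{-1}x$ kills the odd-degree residue in $\mathrm{D}(x)$, which is why $z_2$ ends up central). What the paper's localization buys is that $y=x^{-1}$, so the relation $yx=1$ gives clean coefficient identities; in your direct approach the relation $yx=a(h)$ threads $a$ through all the low-degree bookkeeping, which is precisely the "intricate interaction" you flag. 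Your part (2) argument --- apply $\delta_{z_1,z_2}$ to $yx=a(h)$ to get $a'(h)(h^2-1)z_1=0$ and use that the algebra is a domain --- is the paper's argument verbatim.

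Two soft spots. First, your uniqueness step as stated does not close: from $\mathrm{ad}_t+\delta_{z_1,z_2}=0$ you get $(h^2-1)z_1=-\mathrm{ad}_t(h)$, and injectivity of multiplication by $(h^2-1)$ alone does not force $z_1=0$; you must first observe (as in the paper's Lemma on outerness) that the even-degree components of $\mathrm{ad}_t(h)$ vanish and that the coefficients of $\mathrm{ad}_t(x)$, $\mathrm{ad}_t(y)$ are antisymmetric under $h\mapsto h^{-1}$ while $z_2x$ has symmetric ones, so the two images meet only in $0$ when $\mathrm{char}\,\Bbbk\neq 2$. Second, "$a$ not a monomial implies $a'(h)\neq 0$" can fail in characteristic $p>2$ (e.g.\ $a\in\Bbbk[h^{\pm p}]$); the paper's proof has the same implicit assumption, so this is not a gap relative to the paper, but it is worth noting.
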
 
The prototypical example of an involution generalized Weyl algebra is the group algebra of the infinite group with generators $x$ and $y$ subject to the relation $xy = y^{-1}x.$ Namely, $\Bbbk[h^{\pm 1}](\sigma_{inv},1)$ is isomorphic to skew-Laurent ring $\Bbbk[v^{\pm 1}][u^{\pm 1};\sigma]$, where $\sigma(v)=v^{-1}$.
\\

\begin{acknowledgements}
The author thanks Professor St\'ephane Launois for guidance with the early stages of this work.  
\end{acknowledgements}

\section{Derivations of Quantum Generalized Weyl Algebras}\label{DQGWA}
Throughout this section we make the assumption that $q\in \Bbbk^*$ is not a root of unity.

\subsection{Derivations of the Quantum Torus}
We recall an important result of Osborn and Passman regarding the derivations of the quantum torus $\mathcal{T}_q$, the $\Bbbk$-algebra generated by the symbols $x, x^{-1}, y,$ and $y^{-1}$ subject to the relations $$xy=qyx~~\mathrm{and}~~xx^{-1}=yy^{-1}=1.$$ Originally the following result was proved in a more general setting, namely for twisted group algebras.
\begin{theorem}(Passman, Osbourn \cite[Corollary 2.3]{Osborn})\label{prop_passman_osbourn}
Every derivation of $\mathcal{T}_q$ can be uniquely written as $\mathrm{ad}_t+\delta_{\alpha,\beta}$ where $t\in\mathcal{T}_q$ and $\delta_{\alpha,\beta}$ has action
$$\delta_{\alpha,\beta}(u)=\alpha u~~\mathrm{and}~~\delta_{\alpha,\beta}(v)= \beta v$$
where $\alpha,\beta \in\Bbbk$.
\end{theorem}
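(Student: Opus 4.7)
My plan is to exploit the natural $\mathbb{Z}^2$-grading on $\mathcal{T}_q$ given by $\deg(x)=(1,0)$ and $\deg(y)=(0,1)$. Any derivation $D$ decomposes as $D=\sum_{(m,n)\in\mathbb{Z}^2} D_{m,n}$, where $D_{m,n}$ shifts degree by $(m,n)$ and is itself a derivation since the defining relation $xy-qyx$ is homogeneous. Because $D$ is determined by its values on $x$ and $y$, each of which is a finite sum of monomials, only finitely many $D_{m,n}$ are nonzero, so the problem reduces to classifying homogeneous derivations one degree at a time.

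For a fixed $(m,n)$, homogeneity forces $D_{m,n}(x)=c\,x^{m+1}y^n$ and $D_{m,n}(y)=d\,x^m y^{n+1}$ for scalars $c,d\in\Bbbk$. Applying Leibniz to $xy=qyx$ and using the commutation rule $y^jx^i=q^{-ij}x^iy^j$ collapses to the single scalar relation
\[ c(1-q^{-m})+d(1-q^{-n})=0. \]
When $(m,n)=(0,0)$ both coefficients vanish, $c$ and $d$ are free, and $D_{0,0}$ is precisely a $\delta_{\alpha,\beta}$. For $(m,n)\neq(0,0)$, the hypothesis that $q$ is not a root of unity guarantees that at least one of $1-q^{-m}$ and $1-q^{-n}$ is nonzero; a direct computation gives $\mathrm{ad}_{\gamma x^m y^n}(x)=\gamma(q^{-n}-1)x^{m+1}y^n$ and $\mathrm{ad}_{\gamma x^m y^n}(y)=\gamma(1-q^{-m})x^m y^{n+1}$, so one can solve for $\gamma$ making $D_{m,n}=\mathrm{ad}_{t_{m,n}}$ with $t_{m,n}=\gamma x^m y^n$, after a short case check when exactly one of $m,n$ vanishes.

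Summing over $(m,n)\neq(0,0)$ produces $t=\sum t_{m,n}\in\mathcal{T}_q$ and the decomposition $D=\mathrm{ad}_t+\delta_{\alpha,\beta}$. For uniqueness, the identity $\mathrm{ad}_t+\delta_{\alpha,\beta}=0$, expanded after writing $t=\sum t_{i,j}x^iy^j$, yields two graded expressions in $\mathcal{T}_q$; comparing homogeneous components separately in $\mathrm{ad}_t(x)+\alpha x=0$ and $\mathrm{ad}_t(y)+\beta y=0$ forces $\alpha=\beta=0$ and $t_{i,j}=0$ for all $(i,j)\neq(0,0)$, with only the central scalar $t_{0,0}$ left undetermined (on which $\mathrm{ad}$ acts trivially). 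The main technical nuisance I foresee is the boundary analysis when $m$ or $n$ is zero in the inner-derivation construction, and keeping the sign conventions straight between the two shape-preserving commutations; beyond that, the essential ingredient is the non-root-of-unity hypothesis, which is what ensures that the linear system in $(c,d)$ never has an inner-inaccessible outer solution outside degree $(0,0)$.
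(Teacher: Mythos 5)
Your argument is correct. Note, however, that the paper does not prove this statement at all: it is imported verbatim from Osborn--Passman \cite[Corollary 2.3]{Osborn}, where it is established for general twisted group algebras, so there is no internal proof to compare against. Your proof is a clean, self-contained specialization of that circle of ideas to $\mathcal{T}_q$: the $\mathbb{Z}^2$-grading decomposes $D$ into finitely many homogeneous derivations $D_{m,n}$ (finitely many because $D$ is determined by its values on the generators, each a finite sum of monomials); the Leibniz constraint on $xy=qyx$ gives $c(1-q^{-m})+d(1-q^{-n})=0$; and the computation $\mathrm{ad}_{\gamma x^m y^n}(x)=\gamma(q^{-n}-1)x^{m+1}y^n$, $\mathrm{ad}_{\gamma x^m y^n}(y)=\gamma(1-q^{-m})x^m y^{n+1}$ together with the non-root-of-unity hypothesis shows every $D_{m,n}$ with $(m,n)\neq(0,0)$ is inner, the boundary cases $m=0$ or $n=0$ being handled exactly as you indicate since the constraint then forces the corresponding coefficient to vanish. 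The uniqueness argument is also right: the degree-$(0,0)$ part of any $\mathrm{ad}_t$ is zero, so $\delta_{\alpha,\beta}$ is never inner, and $\mathrm{ad}_t=0$ pins down $t$ up to the center $\Bbbk$. The only gain of the Osborn--Passman route over yours is generality (arbitrary twisted group algebras of arbitrary groups, with the outer part parametrized by a space of group homomorphisms); for the rank-two case needed here your direct argument is entirely adequate and arguably more transparent.
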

\noindent
Theorem \ref{prop_passman_osbourn} has been useful in computing derivations in a number of settings, see for example \cite{Launois1} or \cite{Tang}. Likewise, we will use this to explicitly classify the derivations of $\Bbbk[h^{\pm 1}](\sigma_{q},a)$. 

Let $\Sigma$ denote the multiplicative system of $\Bbbk[h^{\pm 1}](\sigma_q,a)$ generated by $x$. By direct calculation we can verify that $\Sigma$ satisfies the Ore condition. Thus, we have the following localization of $\Bbbk[h^{\pm 1}](\sigma_q,a)$.
\begin{lemma}\label{loc12}
The localization $\Bbbk[h^{\pm 1}](\sigma_q,a)\Sigma^{-1}$ is isomorphic to the quantum torus $\mathcal{T}_q$.
\end{lemma}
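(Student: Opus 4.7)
The plan is to build the isomorphism explicitly, exploiting the fact that once $x$ is invertible the generator $y$ becomes redundant. Writing $B := \Bbbk[h^{\pm 1}](\sigma_q,a)\Sigma^{-1}$ and letting $u^{\pm 1}, v^{\pm 1}$ be the generators of $\mathcal{T}_q$ with $uv = qvu$, I first note that the GWA relation $yx = a(h)$ forces $y = a(h)x^{-1}$ in $B$. Hence $B$ is generated over $\Bbbk$ by the units $h^{\pm 1}$ and $x^{\pm 1}$, and the only commutation among them inherited from the GWA is $xh = \sigma_q(h)x = qhx$. This already matches the presentation of $\mathcal{T}_q$ under the correspondence $u \leftrightarrow x$, $v \leftrightarrow h$.

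To turn this into a formal proof I would produce two mutually inverse homomorphisms. First, $\phi : \mathcal{T}_q \to B$ sending $u \mapsto x$ and $v \mapsto h$ is well-defined because both images are units in $B$ and $xh = qhx$ holds in the GWA. For the inverse, I would define $\psi : \Bbbk[h^{\pm 1}](\sigma_q,a) \to \mathcal{T}_q$ on the GWA generators by $h \mapsto v$, $x \mapsto u$, $y \mapsto a(v)u^{-1}$, and check the four defining GWA relations. The only one needing more than a line is $xy = \sigma_q(a)$, which reduces to the identity $u \cdot a(v) = a(qv) \cdot u$ obtained by repeatedly applying $uv = qvu$. Since $\psi(x) = u$ is a unit, $\psi$ factors through the localization to yield $\bar\psi : B \to \mathcal{T}_q$.

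The last step is to check on generators that $\phi$ and $\bar\psi$ are mutually inverse. Three of the four comparisons are immediate; the only nontrivial one is $\phi(\bar\psi(y)) = \phi(a(v)u^{-1}) = a(h)x^{-1}$, which equals $y$ in $B$ by the observation made at the outset. I anticipate no real obstacle: the whole argument hinges on the one-line remark that inverting $x$ collapses $y$ into a rational expression in $h$ and $x^{-1}$, and after that each relation check is a routine manipulation using $uv = qvu$, of the sort the author already brackets as a ``direct calculation''.
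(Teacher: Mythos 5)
Your argument is correct, and it supplies exactly what the paper leaves unproved: Lemma \ref{loc12} is stated in the paper with no proof at all, only the preceding remark that $\Sigma$ satisfies the Ore condition ``by direct calculation.'' Your explicit pair of maps --- $u\mapsto x$, $v\mapsto h$ one way, and $h\mapsto v$, $x\mapsto u$, $y\mapsto a(v)u^{-1}$ the other, with the key observation that inverting $x$ forces $y=a(h)x^{-1}$ --- is the natural construction and is the same device the author does spell out for the analogous involution-case localization in Lemma \ref{iso1c}, so nothing further is needed.
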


\subsection{Derivations of $\Bbbk[h^{\pm 1}](\sigma_q,a)$}
In this section we will determine the form of any derivation of $\Bbbk[h^{\pm 1}](\sigma_q,a)$ when $a$ is not a monomial. When $a$ is a monomial $\Bbbk[h^{\pm 1}](\sigma_q,a)$ is isomorphic to the quantum torus $\mathcal{T}_q$ and hence we exclude it from our calculations. 

\begin{proposition}\label{K1}
Every derivation of $\Bbbk[h^{\pm 1}](\sigma_q,a)$ is of the form $\mathrm{ad}_t+\delta_{\alpha}$, where $t\in \Bbbk[h^{\pm 1}](\sigma_q,a)$, and $\delta_{\alpha}$ is the derivation of $\Bbbk[h^{\pm 1}](\sigma_q,a)$ defined on the generators by $$\delta_{\alpha}(h)=0,~~\delta_{\alpha}(x)=\alpha x,~~\mathrm{and}~~\delta_{\alpha}(y)=-\alpha y,$$ where $\alpha \in \Bbbk$. 
\end{proposition}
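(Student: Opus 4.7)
The strategy I would take is to localize to the quantum torus via Lemma \ref{loc12}, apply Theorem \ref{prop_passman_osbourn} there, and then descend back to $A := \Bbbk[h^{\pm 1}](\sigma_q, a)$. Any derivation $D$ of $A$ extends uniquely to a derivation $\bar D$ of $A\Sigma^{-1} \cong \mathcal{T}_q$ via the quotient rule. Identifying $\mathcal{T}_q$ with $\Bbbk\langle x^{\pm 1}, h^{\pm 1} \mid xh = qhx \rangle$ (so that $y = a(h) x^{-1}$ inside the torus), Theorem \ref{prop_passman_osbourn} yields a decomposition $\bar D = \mathrm{ad}_s + \tilde\delta_{\alpha,\beta}$ for some $s \in \mathcal{T}_q$ and scalars $\alpha, \beta \in \Bbbk$, with $\tilde\delta_{\alpha,\beta}(x) = \alpha x$ and $\tilde\delta_{\alpha,\beta}(h) = \beta h$.

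The first descent step is to force $\beta = 0$. Writing $s = \sum_{i,j} \lambda_{ij} h^i x^j$ and using $x^j h^i = q^{ij} h^i x^j$, the $x^{-1}$-coefficient of $\bar D(y)$ works out to
\[ \sum_{i} \lambda_{i,0}(1 - q^{-i}) h^i a(h) \;+\; \beta h a'(h) \;-\; \alpha a(h). \]
Since $D(y) \in A$ and since the $x^{-1}$-coefficient of any element of $A$ (viewed in $\mathcal{T}_q$) must lie in $a(h)\Bbbk[h^{\pm 1}]$, this expression must be a multiple of $a(h)$. Two of the three terms are automatic, so the analysis reduces to the divisibility $a(h) \mid \beta h a'(h)$ in $\Bbbk[h^{\pm 1}]$. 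Writing $a(h) = h^m b(h)$ with $b \in \Bbbk[h]$ and $b(0) \neq 0$, this reduces to $b \mid h b'$ in $\Bbbk[h]$, and evaluating at $h = 0$ together with a degree comparison forces $b$ to be a constant. Hence $a$ not being a monomial compels $\beta = 0$.

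With $\beta = 0$, the outer derivation $\tilde\delta_{\alpha,0}$ of $\mathcal{T}_q$ restricts precisely to the $\delta_\alpha$ in the statement, so it remains to realize $\mathrm{ad}_s$ by some $t \in A$. For this, I would apply $\bar D(h) \in A$. A direct calculation gives the $x^{-k}$-component of $\mathrm{ad}_s(h)$ as $(q^{-k} - 1) \sum_i \lambda_{i,-k} h^{i+1}$, which must lie in the $\Bbbk[h^{\pm 1}]$-span of $y^k$ inside $\mathcal{T}_q$, namely $p_k(h) \Bbbk[h^{\pm 1}] x^{-k}$ where $p_k(h) := \prod_{l=0}^{k-1} a(q^{-l}h)$. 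Since $q$ is not a root of unity, $q^{-k} - 1 \neq 0$, and since $h$ is a unit in $\Bbbk[h^{\pm 1}]$, we obtain $\sum_i \lambda_{i,-k} h^i \in p_k(h)\Bbbk[h^{\pm 1}]$. Hence each negative-$x$-component of $s$ already lies in $\Bbbk[h^{\pm 1}] y^k \subset A$; the nonnegative components are in $A$ by inspection, so $s \in A$ and we may take $t = s$.

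I expect the main obstacle to be the bookkeeping in the second paragraph: extracting the $x^{-1}$-coefficient of $\bar D(y)$ cleanly enough that the analysis collapses to the single divisibility test $a(h) \mid \beta h a'(h)$, and then verifying that this divisibility genuinely fails in the Laurent-polynomial setting whenever $a$ is not a monomial. The remaining descent step is a relatively mechanical application of the non-root-of-unity hypothesis.
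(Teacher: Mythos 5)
Your argument is correct and follows the paper's overall strategy---localize at the powers of $x$ via Lemma \ref{loc12}, invoke Theorem \ref{prop_passman_osbourn}, and descend---but the two descent steps are executed differently, and one of them more efficiently. To show the adjoint part lies in $A=\Bbbk[h^{\pm 1}](\sigma_q,a)$, the paper applies $\mathrm{ad}_{t_-}$ to $h^j$ for $j=1,\dots,n$ and proves invertibility of the matrix $\mathbf{M}$ with entries $q^{ij}-1$ via a root-counting argument; you apply $\mathrm{ad}_s$ to $h$ alone and use the $\mathbb{Z}$-grading of $A$ inside $\mathcal{T}_q$ to isolate each $x^{-k}$-component, so a single nonvanishing factor $q^{-k}-1$ replaces the whole matrix lemma---a genuine simplification of that step. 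For killing $\beta$, the order is reversed: the paper first establishes $t\in A$, so that $\delta$ restricts to a derivation of $A$, and then compares coefficients in $(p(h)+\alpha)a(h)=\sum_i i\beta a_i h^i$; you extract the $x^{-1}$-coefficient of $\mathrm{D}(y)$ before knowing $s\in A$ and reduce to the divisibility $a(h)\mid \beta h a'(h)$. These are the same computation in different clothing, since the paper's identity reads exactly $(p(h)+\alpha)a(h)=\beta h a'(h)$, and your bookkeeping (only the $j=0$ part of $s$ contributes to the $x^{-1}$-component, and that contribution is automatically a multiple of $a(h)$) checks out. One caveat you share with the paper: the final inference ``$hb'=0$ forces $b$ constant'' (like the paper's coefficient comparison) is a characteristic-zero step; in characteristic $p$ a non-monomial $a$ with $a'=0$, such as $h^p+1$, defeats both arguments, so this is a tacit hypothesis of the proposition rather than a defect of your proposal.
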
 
\begin{proof}
Let $\textrm{D}$ be a derivation of $\Bbbk[h^{\pm 1}](\sigma_q,a)$. By Lemma \ref{loc12} we have that $\Bbbk[h^{\pm 1}](\sigma_q,a)\Sigma^{-1}\cong \mathcal{T}_q$ and therefore the derivation $\textrm{D}$ extends uniquely to a derivation of $\mathcal{T}_q$. By Theorem \ref{prop_passman_osbourn} we have that $\textrm{D}$ can be written as $\textrm{D}=\mathrm{ad}_t+\delta_{\alpha,\beta}$ where $t\in\mathcal{T}_q$, $\delta_{\alpha,\beta}(x)=\alpha x,$ and $\delta_{\alpha,\beta}(h)=\beta h$, for $\alpha,\beta\in \Bbbk$. For simplicity, we set $\delta_{\alpha,\beta}:=\delta$ for the duration of the proof.  

Using the decomposition $t=t_+ + t_-$, where $$t_+=\displaystyle\sum_{i\geq 0} c_i(h) x^i~~\mathrm{and}~~t_-=\displaystyle\sum_{i> 0} c_{-i}(h) x^{-i}$$
for $c_i(h)\in\Bbbk[h^{\pm 1}]$, we write
$$\textrm{D}=\mathrm{ad}_{t_+}+\mathrm{ad}_{t_-}+\delta.$$ 
Applying $\textrm{D}$ to $h^j$, for $j\in\mathbb{Z}_+$, gives us $$\mathrm{ad}_{t_+}(h^j)+\mathrm{ad}_{t_-}(h^j)+\delta(h^j).$$ 
By considering the form of $\delta$ and $\mathrm{ad}_{t+}$, we see that $\delta(h^j)$ and $\mathrm{ad}_{t_+}(h^j)$ are both elements of $\Bbbk[h^{\pm 1}](\sigma_q,a)$. Therefore, by the assumption that $\textrm{D}$ is a derivation of $\Bbbk[h^{\pm 1}](\sigma_q,a)$, we have that $\mathrm{ad}_{t_-}(h^j)\in \Bbbk[h^{\pm 1}](\sigma_q,a)$ for all $j>0$. 

We will now show that $c_i(h)x^{-i}\in \Bbbk[h^{\pm 1}](\sigma_q,a)$ for all $i>0$, proving that $t\in \Bbbk[h^{\pm 1}](\sigma_q,a)$. By direct calculation, we find that 
\begin{equation}\label{jifdskkkkrgij}
\mathrm{ad}_{t_-}(h^j)= \displaystyle\sum_{i> 0}^n c_{-i}(h) x^{-i}(1-q^{ij}) h^j.
\end{equation} 
After setting $e_j=-\mathrm{ad}_{t_-}(h^j)h^{-j}$ for $1<j<n$, Equation (\ref{jifdskkkkrgij}) can be written in the following way:
\[ \left( \begin{array}{ccccc}
(q-1) & (q^2-1) & (q^3-1) & \ldots & (q^{n}-1) \\
(q^{2}-1)& (q^{4}-1) &(q^{6}-1) & \ldots & (q^{2n}-1) \\
\vdots & \ddots & \ddots & \ddots & \vdots \\
\vdots & \ddots & \ddots & \ddots & \vdots \\
(q^{n}-1) & (q^{2n}-1) & (q^{3n}-1) & \ldots & (q^{n^2}-1) \\
\end{array} \right) 
\left( \begin{array}{c} c_{-1}(h)x^{-1}\\
c_{-2}(h)x^{-2}\\
\vdots\\
\vdots\\
c_{-n}(h)x^{-n}
 \end{array} \right)=\left( \begin{array}{c} e_1\\
e_2\\
\vdots\\
\vdots\\
e_n
 \end{array} \right) \]
 
\noindent 
where $e_i\in \Bbbk[h^{\pm 1}](\sigma_q,a)$ for $1\leq i \leq n$. To show $c_i(h)x^{-i}\in \Bbbk[h^{\pm 1}](\sigma_q,a)$ for all $i>0$, it suffices to show that the matrix  
\[ \textbf{M}:=\left( \begin{array}{ccccc}
(q-1) & (q^2-1) & (q^3-1) & \ldots & (q^{n}-1) \\
(q^{2}-1)& (q^{4}-1) &(q^{6}-1) & \ldots & (q^{2n}-1) \\
\vdots & \ddots & \ddots & \ddots & \vdots \\
\vdots & \ddots & \ddots & \ddots & \vdots \\
(q^{n}-1) & (q^{2n}-1) & (q^{3n}-1) & \ldots & (q^{n^2}-1) \\
\end{array} \right) \]

\noindent
is invertible. We proceed with this strategy. Let $\mathbf{x}:=(x_1,\ldots,x_n)\in(\Bbbk)^n$ and consider the polynomial 
$$g(r)=x_1 (r-1)+x_2(r^2-1)+\ldots+x_n( r^n-1).$$ 
Observe that $g(q^i)$ gives the $i^{\mathrm{th}}$ entry in the product of $\textbf{M}\cdot \textbf{x}^{\top}$. Assuming $\textbf{M}\cdot \textbf{x}^{\top}=0$ implies that $q^i$ is a root of $g$ for $1\leq i \leq n$. By examination, $g(1)=0,$ and since $q$ is not a root of unity, we have that there are at least $n+1$ distinct roots of $g$, contradicting the assumption $\textbf{M}\cdot \textbf{x}^{\top}=0$. Since we have shown that the null space of $\textbf{M}$ is trivial, $\textbf{M}$ is invertible.       

We now determine the action of $\delta$ on $y$. Since we have shown that $t\in \Bbbk[h^{\pm 1}](\sigma_q,a)$ we have, since $\textrm{D}$ is a derivation, $\delta$ is a derivation of $\Bbbk[h^{\pm 1}](\sigma_q,a)$. Thus, $\delta(y)\in \Bbbk[h^{\pm 1}](\sigma_q,a)$. Applying $\delta$ to the relation $yh=q^{-1}hy$ gives us $$\delta(y)h+y\beta h =q^{-1} \beta h y +q^{-1}h \delta(y)$$ which reduces to $\delta(y) h=q^{-1}h \delta(y)$. An application of Lemma 5.1 (iii) of \cite{Bavula1} gives us that $\delta(y)=p(h)y$ for $p(h)\in\Bbbk[h^{\pm1}]$. Applying $\delta$ to the relation $yx=a(h)$ gives us 
$$p(h)yx+\alpha yx=\delta(a(h)).$$
Rewriting $a(h)=\displaystyle\sum_{i\in\mathbb{Z}}a_i h^i$ allows us to see that 
$(p(h)+\alpha)\displaystyle\sum_{i\in\mathbb{Z}}a_i h^i=\displaystyle\sum_{i\in\mathbb{Z}}a_i \delta(h^i)$
which implies 
\begin{equation}\label{L3}
(p(h)+\alpha)\displaystyle\sum_{i\in\mathbb{Z}}a_i h^i=\displaystyle\sum_{i\in\mathbb{Z}}i \beta a_i h^i.
\end{equation}
By comparing coefficients of $h^i$ in Equation $(\ref{L3})$, we find that $\beta=0$ and $p(h)=-\alpha$.
\end{proof}

\begin{remark}
Since any derivation of $\Bbbk[h^{\pm 1}](\sigma_q,a)$ uniquely extends to a derivation of $\mathcal{T}_q$, Proposition \ref{Prop_main_1} follows directly from Proposition \ref{prop_passman_osbourn}.
\end{remark}

\section{Involution Generalized Weyl Algebras}\label{IGWA1}
To begin, we note that when studying $\Bbbk[h^{\pm 1}](\sigma_{inv},a)$ it is evident that when $a(h)$ is a monomial, say $a(h)=a_1h^i$ for $a_1\in\Bbbk$ and $i\in\mathbb{Z}$, the choice of $q$ is unimportant assuming, as we will from this point, $q$ has a second root in $\Bbbk$. We see this by considering the isomorphism $\Gamma$, between $\Bbbk[h^{\pm 1}](\sigma_{inv}',a_1 h^i)$ and $\Bbbk[h^{\pm 1}](\sigma_{inv},a_1 h^i)$, where $\sigma_{inv}'(h)=qh^{-1}$ and $\sigma_{inv}(h)=h^{-1}$, defined on the generators of $\Bbbk[h^{\pm 1}](\sigma_{inv}',a_1 h^i)$ by
$$\Gamma(\bar{h})=q^{\frac{1}{2}}h,~~\Gamma(\bar{x})=q^{\frac{i}{2}}x,~~\mathrm{and}~~\Gamma(\bar{y})=y.$$
We note that we have marked the generators of $\Bbbk[h^{\pm 1}](\sigma_{inv}',a)$ with a bar merely to distinguish them from the generators of $\Bbbk[h^{\pm 1}](\sigma_{inv},a)$. 

\begin{remark}
In the sequel, when considering $\Bbbk[h^{\pm 1}](\sigma_{inv},a)$ where $a$ is a monomial, we need only study the case where $\sigma_{inv}(h)=h^{-1}$. We will be somewhat cavalier with the notation $\sigma_{inv}$ using it to mean $\sigma_{inv}(h)=h^{-1}$ when $a$ is a monomial and $\sigma_{inv}(h)=qh^{-1}$, for $q\in\Bbbk^*$, when $a$ is not a monomial. 
\end{remark}

\subsection{Grading}\label{grading1}
The algebra $\Bbbk[h^{\pm 1}](\sigma, a)$ is $\mathbb{Z}$-graded for any choice of $\sigma$, see for example \cite{Bavula2}. Picking $\mathrm{deg}(h)=0, \mathrm{deg}(x)=1,$ and $\mathrm{deg}(y)=-1$, we have that
$$\Bbbk[h^{\pm 1}](\sigma, a)= \bigoplus_{n\in\mathbb{Z}}A_n,$$
where $A_n =\Bbbk[h^{\pm 1}]x^n$ if $n\geq 0$ and $A_n =\Bbbk[h^{\pm 1}]y^{-n}$ if $n<0$. Thus, any element in $\Bbbk[h^{\pm 1}](\sigma, a)$ can be expressed as 
$\displaystyle\sum_{i=0}p_i(h)x^i+\displaystyle\sum_{i=1}p_i'(h)y^i$, where $p_i(h), p_i'(h)\in \Bbbk[h^{\pm 1}]$.

\subsection{Center and Units}\label{center_and_units}
We begin by classifying the center and units for the algebra $\Bbbk[h^{\pm 1}](\sigma_{inv},a)$ when $a$ is a monomial. First we show that in this case $\Bbbk[h^{\pm 1}](\sigma_{inv},a)$ is a skew-Laurent ring. Note, since $a$ is a monomial we only consider the case where $\sigma_{inv}(h)=h^{-1}$ (see the remarks the begin Section \ref{IGWA1}).   
\begin{lemma}\label{iso1c}
Let $a\in\Bbbk[h^{\pm 1}]$ be a monomial. Then $\Bbbk[h^{\pm 1}](\sigma_{inv},a)$ is isomorphic to the skew Laurent extension $\Bbbk[v^{\pm 1}][u^{\pm 1}; \sigma]$, where $\sigma(v)=v^{-1}$. 
\end{lemma}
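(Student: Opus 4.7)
The plan is to exhibit an explicit isomorphism by leveraging the fact that when $a(h)$ is a monomial, say $a = a_1 h^i$ with $a_1\in\Bbbk^*$ and $i\in\mathbb{Z}$, the generator $x$ becomes a unit in $\Bbbk[h^{\pm 1}](\sigma_{inv},a)$. Using the defining relations $yx = a_1 h^i$, $xy = a_1 h^{-i}$, and $xh^j = h^{-j}x$, a direct calculation shows that $a_1^{-1} h^{-i} y$ is a two-sided inverse to $x$. Consequently $y = a_1 h^i x^{-1}$, so the algebra is in fact generated by $\{h^{\pm 1}, x^{\pm 1}\}$.

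With this in hand, I would construct mutually inverse homomorphisms. The forward map $\phi : \Bbbk[v^{\pm 1}][u^{\pm 1};\sigma] \to \Bbbk[h^{\pm 1}](\sigma_{inv},a)$ sends $v\mapsto h$ and $u\mapsto x$; it is well-defined since $h$ and $x$ are units in the target and the skew-Laurent relation $uv = v^{-1}u$ is exactly the GWA relation $xh = h^{-1}x$. For the reverse map I would define $\psi : \Bbbk[h^{\pm 1}](\sigma_{inv},a) \to \Bbbk[v^{\pm 1}][u^{\pm 1};\sigma]$ on generators by $h\mapsto v$, $x\mapsto u$, and $y\mapsto a_1 v^i u^{-1}$, and check that the four GWA defining relations $yx = a$, $xy = \sigma_{inv}(a)$, $xh = h^{-1}x$, and $yh = h^{-1}y$ are respected by these images. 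The relation $xh = h^{-1}x$ is automatic, and $yx = a$ is straightforward once one commutes $u^{-1}$ through $u$; the remaining two reduce, via the consequence $u^{-1}v = v^{-1}u^{-1}$ of $uv = v^{-1}u$, to the identities $u v^i u^{-1} = v^{-i}$ and $v^i u^{-1} v = v^{i-1}u^{-1}$. Once $\psi$ is well-defined, $\phi\circ\psi$ and $\psi\circ\phi$ visibly fix the generators of their respective algebras, so $\phi$ is the desired isomorphism.

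The main obstacle is the bookkeeping in the relation-checking for $\psi$: one needs to be careful moving $u^{\pm 1}$ past powers of $v$ using $uv = v^{-1}u$, and the identity $\psi(xy) = \psi(\sigma_{inv}(a))$ relies on the fact that conjugation by $u$ inverts every power of $v$. None of these calculations is conceptually difficult, but together they constitute the bulk of the verification, after which surjectivity and injectivity follow formally from having produced a two-sided inverse.
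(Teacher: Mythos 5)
Your proposal is correct and matches the paper's proof, which simply exhibits the map $h\mapsto v$, $x\mapsto u$, $y\mapsto \alpha v^i u^{-1}$ and asserts it is an isomorphism; you supply the same map together with the relation-checking and the observation that $x$ is a unit, which the paper leaves implicit.
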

\begin{proof}
The map $\Gamma$ defined on the generators of $\Bbbk[h^{\pm 1}](\sigma_{inv},a)$ as follows:
$$\Gamma(h)=v,~~\Gamma(x)=u,~~\mathrm{and}~~\Gamma(y)=\alpha v^i u^{-1}$$
where $\alpha\in\Bbbk^*$ and $i\in\mathbb{Z}$,
provides the desired isomorphism.
\end{proof}
\begin{remark}
Since Lemma \ref{iso1c} gives us that $\Bbbk[h^{\pm 1}](\sigma_{inv},a)\cong \Bbbk[h^{\pm 1}](\sigma_{inv},1)$ when $a$ is a monomial, it is enough to consider $\Bbbk[h^{\pm 1}](\sigma_{inv},1)$.
\end{remark}

\begin{corollary}\label{Units1265}
Every unit of $\Bbbk[h^{\pm 1}](\sigma_{inv},1)$ is of the form $\alpha h^i x^j$ for $\alpha\in\Bbbk^*$ and $i,j\in\mathbb{Z}$. 
\end{corollary}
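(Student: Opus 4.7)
The plan is to invoke Lemma~\ref{iso1c} and transport the problem to the skew Laurent ring $S:=\Bbbk[v^{\pm 1}][u^{\pm 1};\sigma]$ with $\sigma(v)=v^{-1}$. Under $\Gamma$, an element of the form $\alpha h^i x^j$ corresponds to $\alpha v^i u^j$, so it is enough to show that every unit of $S$ is a scalar multiple of some $v^i u^j$.

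I would begin by noting that $S$ is a domain, since $\Bbbk[v^{\pm 1}]$ is a commutative domain and $\sigma$ is an automorphism; this is a standard feature of skew Laurent extensions. Write a nonzero $f\in S$ in its unique expansion $f=\sum_{i=m}^{n} r_i u^i$ with $r_m,r_n\in\Bbbk[v^{\pm 1}]\setminus\{0\}$ and $m\le n$, and likewise a putative inverse as $g=\sum_{j=p}^{q} s_j u^j$. I would then examine the extremal $u$-powers of $fg$. Because $\Bbbk[v^{\pm 1}]$ is a domain, the coefficient of $u^{n+q}$ in $fg$ is $r_n\sigma^n(s_q)\neq 0$, and the coefficient of $u^{m+p}$ is $r_m\sigma^m(s_p)\neq 0$. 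The equation $fg=1$ then forces both $n+q=0$ and $m+p=0$; combined with $m\le n$ and $p\le q$ this compels $m=n$ and $p=q$. Hence $f=r_n u^n$ and $g=s_{-n}u^{-n}$ must each be monomials in $u$.

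It remains to identify the coefficient $r_n$. From $(r_n u^n)(s_{-n}u^{-n})=r_n\sigma^n(s_{-n})=1$, we see that $r_n$ is a unit of $\Bbbk[v^{\pm 1}]$, and since the units of a Laurent polynomial ring over a field are precisely the scalar monomials, $r_n=\alpha v^i$ for some $\alpha\in\Bbbk^*$ and $i\in\mathbb{Z}$. Pulling back through $\Gamma^{-1}$ produces the desired units $\alpha h^i x^n$ of $\Bbbk[h^{\pm 1}](\sigma_{inv},1)$. I do not anticipate any serious obstacle: the only mild subtlety is keeping track of the twist by $\sigma$ when multiplying, which is exactly what guarantees that the top and bottom $u$-degrees behave additively and forces the unit to be a single monomial.
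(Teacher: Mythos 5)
Your proposal is correct and follows the same route as the paper: the paper's proof consists solely of citing the isomorphism of Lemma \ref{iso1c}, leaving the determination of the units of $\Bbbk[v^{\pm 1}][u^{\pm 1};\sigma]$ as a standard fact, which you have simply written out in full (the leading/trailing $u$-degree argument and the identification of the units of $\Bbbk[v^{\pm 1}]$). Your details are accurate, so this is just a more explicit version of the paper's one-line proof.
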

\begin{proof}
This follows by considering the isomorphism stated in Lemma \ref{iso1c}. 
\end{proof}

\begin{lemma}\label{center3423}
The center of $\Bbbk[h^{\pm 1}](\sigma_{inv},1)$ is isomorphic to the polynomial ring $\Bbbk[h+h^{-1},x^{\pm 2}]$.
\end{lemma}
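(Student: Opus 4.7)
The plan is to exploit the fact that when $a=1$ the defining relations $yx=1=xy$ force $y=x^{-1}$, so by Lemma~\ref{iso1c} the algebra is literally the skew-Laurent ring $\Bbbk[h^{\pm 1}][x^{\pm 1};\sigma_{inv}]$. Using the $\mathbb{Z}$-grading from Section~\ref{grading1}, I can write any central element uniquely as $z=\sum_{i\in\mathbb{Z}} p_i(h)\,x^i$ with $p_i\in\Bbbk[h^{\pm 1}]$, and then translate centrality into two separate conditions on the coefficients.

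First I would impose $hz=zh$. Since $x^i h = \sigma_{inv}^i(h)\,x^i = h^{(-1)^i}\,x^i$, comparing coefficients of $x^i$ on both sides gives $p_i(h)\bigl(h^{(-1)^i}-h\bigr)=0$ in $\Bbbk[h^{\pm 1}]$. For even $i$ this is automatic; for odd $i$ the factor $h^{-1}-h$ is a nonzero-divisor, so $p_i=0$. Thus $z=\sum_{k\in\mathbb{Z}} p_{2k}(h)\,x^{2k}$. Next I would impose $xz=zx$: since $x\,p_{2k}(h)=\sigma_{inv}(p_{2k})(h)\,x=p_{2k}(h^{-1})\,x$, comparing coefficients yields $p_{2k}(h)=p_{2k}(h^{-1})$ for every $k$.

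The remaining step is to identify the $\sigma_{inv}$-fixed subring of $\Bbbk[h^{\pm 1}]$: writing $p=\sum c_j h^j$, the condition $p(h)=p(h^{-1})$ is equivalent to $c_j=c_{-j}$, and the standard Chebyshev-type argument expresses each $h^j+h^{-j}$ as a polynomial in $h+h^{-1}$, so $\Bbbk[h^{\pm 1}]^{\sigma_{inv}}=\Bbbk[h+h^{-1}]$. Combining the two conditions, the center is $\bigoplus_{k\in\mathbb{Z}}\Bbbk[h+h^{-1}]\,x^{2k}$.

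Finally I would observe that this decomposition is a direct sum inside the graded algebra (different powers of $x^{2}$ sit in distinct graded components), so $h+h^{-1}$ and $x^{\pm 2}$ are algebraically independent over $\Bbbk$; hence the evident map $\Bbbk[T,U^{\pm 1}]\to Z(\Bbbk[h^{\pm 1}](\sigma_{inv},1))$ sending $T\mapsto h+h^{-1}$ and $U\mapsto x^2$ is an isomorphism onto the center, giving the claimed identification with $\Bbbk[h+h^{-1},x^{\pm 2}]$. No step looks like a real obstacle here; the only mildly delicate point is recognising the invariant ring $\Bbbk[h^{\pm 1}]^{\sigma_{inv}}$, which is why I would state it explicitly rather than invoke it.
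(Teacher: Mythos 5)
Your proposal is correct and follows essentially the same route as the paper: reduce to the skew-Laurent ring via $y=x^{-1}$, expand a central element in the $\mathbb{Z}$-grading, use commutation with $h$ to kill odd components and commutation with $x$ to force $p(h)=p(h^{-1})$, then identify the invariant ring with $\Bbbk[h+h^{-1}]$ and note algebraic independence from the grading. The only difference is that you spell out the Chebyshev-type identification of $\Bbbk[h^{\pm 1}]^{\sigma_{inv}}$, which the paper dispatches with ``by induction.''
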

\begin{proof}
Since $yx=1$ we replace $y$ by $x^{-1}$. Suppose $p$ is central. \`{A} la Section \ref{grading1}, we can express $p$ as the finite sum $p=\displaystyle\sum_{i\in\mathbb{Z}} p_i(h) x^i$ where $p_i(h)\in\Bbbk[h^{\pm{1}}]$. By assumption we have
$$ \displaystyle\sum_{i\in\mathbb{Z}} p_i(h) x^i x=x \displaystyle\sum_{i\in\mathbb{Z}} p_i(h) x^i$$
which implies  
$$ \displaystyle\sum_{i\in\mathbb{Z}} p_i(h) x^i x= \displaystyle\sum_{i\in\mathbb{Z}} p_i(h^{-1}) x^i x.$$ By comparing coefficients we find that $p_i(h)=p_i(h^{-1})$ for all $i\in\mathbb{Z}$. Similarly we have that
$$\displaystyle\sum_{i\in\mathbb{Z}} p_i(h) x^i h=h \displaystyle\sum_{i\in\mathbb{Z}} p_i(h) x^i$$ implying that $$\displaystyle\sum_{i\in\mathbb{Z}} h^{-1}p_{2i-1}(h) x^{2i-1} = \displaystyle\sum_{i\in\mathbb{Z}} h p_{2i-1}(h) x^{2i-1}$$ 
which yields that $p_{2i-1}(h)=0$ for $i\in \mathbb{Z}$. Thus, we have shown $p=\displaystyle\sum_{i\in\mathbb{Z}} p_i(h) x^{2i}$, where $p_i(h)=p_i(h^{-1})$. 

By induction we can show that $h+h^{-1}$ generates every polynomial of the form $p(h)=p(h^{-1})$ and we have that $h+h^{-1}$ and $x^2$ are algebraically independent.
\end{proof}

We now focus our attention on the case where $a$ is not a monomial and thus, we again consider $\sigma(h)=q h^{-1}$, where $q\in\Bbbk^*$. 
\begin{lemma}\label{fsddf5367}
For $a$ not a monomial, the center of $\Bbbk[h^{\pm 1}](\sigma_{inv},a)$ is isomorphic to the generalized Weyl algebra $\Bbbk[h+qh^{-1}](\mathrm{id},a(h)a(qh^{-1}))$.
\end{lemma}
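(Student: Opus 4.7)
The plan is to use the $\mathbb{Z}$-grading from Section \ref{grading1} to reduce the problem to identifying, in each graded component, which elements are central, and then to recognize the resulting collection as a generalized Weyl algebra with trivial twist. Since $h$, $x$, $y$ are homogeneous, a central element decomposes as a sum of central homogeneous components, so I would write a general central element as $p=\sum_{i\geq 0}p_i(h)x^i+\sum_{i\geq 1}p_i'(h)y^i$ and analyze each term separately.

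The first key step is to derive the commutation rules of a monomial $f(h)x^n$ (or $f(h)y^n$) with $h$. A direct induction using $xh=qh^{-1}x$ shows that $x^{2k}$ and $y^{2k}$ commute with $h$, while $x^{2k+1}h = qh^{-1}x^{2k+1}$ and similarly for $y^{2k+1}$. Commuting $p_i(h)x^i$ with $h$ therefore forces $p_i=0$ for odd $i$, and likewise for $p_i'$; so every homogeneous central element has the form $p_{2k}(h)x^{2k}$ or $p_{2k}'(h)y^{2k}$. Next, I would commute with $x$: using $xy=a(qh^{-1})$ and $yx=a(h)$ one computes $xy^{2k}=a(qh^{-1})y^{2k-1}$ and $y^{2k}x=a(qh^{-1})y^{2k-1}$, and then the condition $[x,p_{2k}'(h)y^{2k}]=0$ collapses (since $a\neq 0$) to $p_{2k}'(qh^{-1})=p_{2k}'(h)$; the analogous calculation for $p_{2k}(h)x^{2k}$ gives $p_{2k}(qh^{-1})=p_{2k}(h)$. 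A standard check shows that the $\sigma_{inv}$-fixed subring of $\Bbbk[h^{\pm 1}]$ is exactly the polynomial ring $\Bbbk[h+qh^{-1}]$, so each coefficient must lie in $\Bbbk[h+qh^{-1}]$. Commutation with $y$ will give no new constraints by symmetry of the same calculation.

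Having identified the center as $\Bbbk[H]\oplus\bigoplus_{k\geq 1}\Bbbk[H]x^{2k}\oplus\bigoplus_{k\geq 1}\Bbbk[H]y^{2k}$ with $H:=h+qh^{-1}$, I would construct the desired isomorphism by sending the generators of $\Bbbk[H](\mathrm{id},a(h)a(qh^{-1}))$ as $H\mapsto h+qh^{-1}$, $X\mapsto x^2$, $Y\mapsto y^2$. The GWA relations are verified by a short computation: $x^2 y^2 = x\,a(qh^{-1})\,y = a(h)a(qh^{-1})$ and $y^2x^2 = y\,a(h)\,x = a(qh^{-1})a(h)$, and both $x^2$ and $y^2$ commute with $h$ (hence with $H$). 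A preliminary sanity check is that $a(h)a(qh^{-1})$ is indeed $\sigma_{inv}$-invariant, hence lies in $\Bbbk[H]$, which follows immediately from $\sigma_{inv}\big(a(h)a(qh^{-1})\big)=a(qh^{-1})a(h)$.

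The remaining task is to confirm the map is an isomorphism. Surjectivity is immediate from the description of the center just obtained. For injectivity, the images $\{H^i x^{2j}\}_{i\geq 0,\,j\geq 0}$ and $\{H^i y^{2j}\}_{i\geq 0,\,j\geq 1}$ are linearly independent in $\Bbbk[h^{\pm 1}](\sigma_{inv},a)$ because the powers $x^{2j}$ and $y^{2j}$ sit in distinct graded components, and within each component $\{(h+qh^{-1})^i\}_{i\geq 0}$ is linearly independent in $\Bbbk[h^{\pm 1}]$. The main obstacle I anticipate is bookkeeping the odd/even parity behaviour of the $x$-conjugation action cleanly; once that combinatorial picture is set up, the identification of the center is essentially forced.
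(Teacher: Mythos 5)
Your identification of the central elements coincides with the paper's: both decompose a central element via the $\mathbb{Z}$-grading, use commutation with $h$ to kill the odd components, use commutation with $x$ to force each coefficient to satisfy $p(h)=p(qh^{-1})$, and identify the $\sigma_{inv}$-fixed subring of $\Bbbk[h^{\pm 1}]$ as $\Bbbk[h+qh^{-1}]$, so that the center is generated by $h+qh^{-1}$, $x^2$ and $y^2$. Where you genuinely diverge is in upgrading this generating set to the claimed isomorphism. The paper takes a surjection $f$ from $\Bbbk[h+qh^{-1}](\mathrm{id},a(h)a(qh^{-1}))$ onto the center defined on generators, and argues injectivity by Gelfand--Kirillov dimension: the source is a domain of GK dimension two, the center has GK dimension two (being squeezed between $\Bbbk[h+qh^{-1},x^2]$ and the whole algebra), and a proper quotient of a GK-dimension-two domain would drop dimension by \cite[Proposition 3.15]{Lenagan78}. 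You instead verify directly that $H\mapsto h+qh^{-1}$, $X\mapsto x^2$, $Y\mapsto y^2$ respects the defining relations (including the sanity check that $a(h)a(qh^{-1})$ is $\sigma_{inv}$-invariant, hence a polynomial in $H$) and prove injectivity by observing that the images of the standard basis $\{H^iX^j\}\cup\{H^iY^j\}$ land in distinct graded components, with the powers of $h+qh^{-1}$ linearly independent within each component. Both arguments are sound; yours is more elementary and self-contained, avoiding any appeal to GK-dimension theory, at the cost of an explicit (but routine) relation-and-basis check, while the paper's is shorter once the dimension machinery is granted.
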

\begin{proof}
Suppose $u$ is central. Let $u=\displaystyle\sum_{i\geq 0} p_i(h) x^i+\displaystyle\sum_{i>0} p_i'(h) y^i$ where $p_i(h),p_i'(h)\in\Bbbk[h^{\pm 1}]$. By assumption we have that
\begin{equation}
h\left[\displaystyle\sum_{i\geq 0} p_i(h) x^i+\displaystyle\sum_{i>0} p_i'(h) y^i\right]=\left[\displaystyle\sum_{i\geq 0} p_i(h) x^i+\displaystyle\sum_{i>0} p_i'(h) y^i\right]h.
\end{equation}
By comparing coefficients in Equation \ref{fsddf5367} we find that $p_i(h)=p_i'(h)=0$ for all odd $i$. Again by assumption we have $xu=ux$. This implies $p_i(h)=p_i'(qh^{-1})$ and $p_i'(h)=p_i'(qh^{-1})$ for all $i$. By induction we can show that a Laurent polynomial belongs to the ring $\Bbbk[h+qh^{-1}]$ if and only if $p(h)=p(qh^{-1})$. Thus, we find that $Z(\Bbbk[h^{\pm 1}](\sigma_{inv},a))$ is generated by $h+qh^{-1}$, $x^2$, and $y^2$. Since $x^2$ and $h+qh^{-1}$ are algebraically independent, we find that the GK dimension of $Z(\Bbbk[h^{\pm 1}](\sigma_{inv},a))$ greater than one. It is known that the GK dimension of a generalized Weyl algebra over $\Bbbk[h^{\pm 1}]$ is two and thus, the GK dimension of $Z(\Bbbk[h^{\pm 1}](\sigma_{inv},a))$ is two. It is clear that we can find a surjection say $f$ from $\Bbbk[h+h^{-1}](\mathrm{id},a(h)a(qh^{-1}))$ to $Z(\Bbbk[h^{\pm 1}](\sigma_{inv},a))$ by mapping generators to generators. Thus, $$\frac{\Bbbk[h+h^{-1}](\mathrm{id},a(h)a(qh^{-1}))}{\mathrm{ker}f}\cong Z(\Bbbk[h^{\pm 1}](\sigma_{inv},a)).$$ Since $\Bbbk[h+h^{-1}](\mathrm{id},a(h)a(qh^{-1}))$ is a domain of GK dimension two, if $f$ has a non-trivial kernel a contradiction would arise (see \cite[Proposition 3.15]{Lenagan78}). Hence we have the desired isomorphism. 
\end{proof}

Let $\Sigma$ and $\Sigma^{-1}$ be the multiplicative sets generated by $x$ and $y$ respectively. Note $\Sigma$ and $\Sigma^{-1}$ are both sets of regular elements in $\Bbbk[h^{\pm 1}](\sigma_{inv},a)$. For the statement and proof of Lemma \ref{loca989}, we use the notation $\sigma_{inv}'(h)=h$.
\begin{lemma}\label{loca989}
For $a(h)$ not a monomial, we have $$\Bbbk[h^{\pm 1}](\sigma_{inv},a)\Sigma\cong \Bbbk[h^{\pm 1}](\sigma_{inv}',1)~~\mathrm{and}~~\Bbbk[h^{\pm 1}](\sigma_{inv},a)\Sigma'\cong \Bbbk[h^{\pm 1}](\sigma_{inv}',1).$$ 
\end{lemma}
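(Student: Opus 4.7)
The strategy is to localize at $\Sigma$ (resp.\ $\Sigma^{-1}$) in order to turn the relation $yx = a$ (resp.\ $xy = \sigma_{inv}(a)$) into an expression that makes $y$ (resp.\ $x$) redundant, thereby collapsing $\Bbbk[h^{\pm 1}](\sigma_{inv},a)$ to a skew-Laurent ring in two variables. This skew-Laurent ring will then be recognised as $\Bbbk[h^{\pm 1}](\sigma_{inv}', 1)$, since taking the defining polynomial to be $1$ forces $yx = xy = 1$ and hence makes one generator the inverse of the other.

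First, I would verify that $\Sigma$ satisfies the Ore condition; the symmetric argument, with the roles of $x$ and $y$ (and of $\sigma_{inv}$ and $\sigma_{inv}^{-1}$) swapped, will handle $\Sigma^{-1}$. For any $r \in \Bbbk[h^{\pm 1}]$ the identity $x^n r = \sigma_{inv}^n(r)\, x^n$ is immediate from the defining relations, and repeated use of $xy = \sigma_{inv}(a)$ together with $xh = \sigma_{inv}(h)\, x$ shows that $x^n y$ lies in $\Bbbk[h^{\pm 1}](\sigma_{inv},a)\cdot \Sigma$. The $\mathbb{Z}$-grading recalled in Section \ref{grading1} reduces the general Ore conditions to these generator-level checks.

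Second, with the Ore conditions in hand I would pass to the localization and invoke its universal property. Inside $\Bbbk[h^{\pm 1}](\sigma_{inv},a)\Sigma$ the relation $yx = a$ rearranges to $y = a\, x^{-1}$, eliminating $y$ from the generating set; the localization is therefore presented by $h^{\pm 1}$ and $x^{\pm 1}$ subject only to $xh = \sigma_{inv}(h)\, x$, i.e.\ the skew-Laurent extension $\Bbbk[h^{\pm 1}][x^{\pm 1};\sigma_{inv}]$. The target algebra $\Bbbk[h^{\pm 1}](\sigma_{inv}',1)$ admits the same presentation after the rescaling of $h$ by a square root of $q$ introduced at the start of Section \ref{IGWA1}, which is available by the standing assumption on $\Bbbk$, and the isomorphism then follows from matching generators to generators. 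The mirror argument, using $xy = \sigma_{inv}(a)$ in place of $yx = a$, yields the claim for $\Sigma^{-1}$.

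The main obstacle, though modest, is the right Ore condition: one must convert left $x^n$-multiples of arbitrary elements into right multiples by elements of $\Sigma$, which demands iterated application of $xy = \sigma_{inv}(a)$ and $xh = \sigma_{inv}(h)\, x$ and some care with the graded decomposition. Once both Ore conditions are established, the remaining isomorphism claim reduces to observing that the localization and the target generalized Weyl algebra are given by the same generators and relations.
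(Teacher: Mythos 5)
Your proposal is correct and follows essentially the same route as the paper: verify the Ore condition for the powers of $x$ (resp.\ $y$), observe that inverting $x$ makes $y=a(h)x^{-1}$ redundant so the localization is the skew-Laurent ring $\Bbbk[h^{\pm 1}][x^{\pm 1};\sigma_{inv}]$, and identify this with $\Bbbk[h^{\pm 1}](\sigma_{inv}',1)$ via Lemma \ref{iso1c} together with the $q^{1/2}$-rescaling of $h$. You spell out the identification of the localization slightly more explicitly than the paper, which simply cites Lemma \ref{iso1c} and the remark following it, but the argument is the same.
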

\begin{proof}
We only need show that $x$ and $y$ satisfy the Ore condition. Let $$r=\displaystyle\sum_{i\geq 0} p_i(h)x^i+\displaystyle\sum_{i=e}^d p_i'(h)y^i$$ where $p_i(h),p_i'(h)\in \Bbbk[h^{\pm{1}}]$ and $e,d\in\mathbb{Z}_{>0}$. It is clear that we can rewrite $r x^{d+1}=x r'$ where $r'\in \Bbbk[h^{\pm 1}](\sigma_{inv},a)$. The proof easily extends to any element in $\Sigma$ and showing that $y$ satisfies the Ore condition follows analogously. 
The result follows from Lemma \ref{iso1c} and the remark that follows it. 
\end{proof}

\begin{lemma}\label{fdkjh999990}
For $a$ not a monomial, every unit of $\Bbbk[h^{\pm 1}](\sigma_{inv},a)$ is of the form $\alpha h^i$ where $\alpha\in\Bbbk^*$ and $i\in\mathbb{Z}$.
\end{lemma}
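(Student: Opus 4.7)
The plan is to exploit the two localizations established in Lemma \ref{loca989}, the classification of units of the skew-Laurent ring given in Corollary \ref{Units1265}, and the $\mathbb{Z}$-grading on $A:=\Bbbk[h^{\pm 1}](\sigma_{inv},a)$ described in Section \ref{grading1}. Let $u$ be a unit in $A$. The inclusions $A\hookrightarrow A\Sigma^{-1}$ and $A\hookrightarrow A(\Sigma')^{-1}$ send units to units, so transporting Corollary \ref{Units1265} through the isomorphisms in Lemma \ref{loca989} should supply expressions
$$u=\alpha h^{i}x^{j}\in A\Sigma^{-1}\quad\text{and}\quad u=\beta h^{i'}y^{k}\in A(\Sigma')^{-1}$$
for suitable $\alpha,\beta\in\Bbbk^{*}$ and $i,i',j,k\in\mathbb{Z}$.

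Next I would extend the grading from $A$ to $A\Sigma^{-1}$ by declaring $\deg(x^{-1})=-1$, and similarly to $A(\Sigma')^{-1}$. Both inclusions are then graded, so $u$ is homogeneous in $A$ of grade equal to $j$ and to $-k$ respectively. Thus it suffices to establish $j\geq 0$ and $k\geq 0$: equating the two then forces $j=k=0$, so $u\in A_{0}=\Bbbk[h^{\pm 1}]$ and the first expression gives $u=\alpha h^{i}$.

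To see that $j\geq 0$ I would argue by contradiction. If $j<0$ then the grade-$j$ component of $u$ in $A$ is of the form $g(h)y^{|j|}$ for some $g\in\Bbbk[h^{\pm 1}]$. A short induction inside $A\Sigma^{-1}$, using $yx=a(h)$, $xy=a(qh^{-1})$, and the fact that $\sigma_{inv}^{2}=\mathrm{id}$, gives
\begin{align*}
y^{2n} &= [a(h)a(qh^{-1})]^{n}\,x^{-2n},\\
y^{2n+1} &= [a(h)a(qh^{-1})]^{n}a(h)\,x^{-(2n+1)}.
\end{align*}
Comparing with $u=\alpha h^{i}x^{j}$ and cancelling $x^{j}$ would yield an identity $g(h)P(h)=\alpha h^{i}$ in $\Bbbk[h^{\pm 1}]$, where $P(h)$ is one of the two products above. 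Since $\alpha h^{i}$ is a unit in the Laurent polynomial ring and the units there are precisely the monomials, both $g(h)$ and $P(h)$ would have to be monomials. An inspection of the extremal degrees shows that when $a=\sum_{k=m}^{M}a_{k}h^{k}$ is not a monomial (so $m<M$), the product $a(h)a(qh^{-1})$ has a nonzero term in degree $M-m$ and another in degree $m-M$, hence is not a monomial; the same is therefore true of $P(h)$, a contradiction. The identical argument applied to the $y$-localization yields $k\geq 0$, closing the proof.

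The main obstacle will be this negative-grade case: controlling $y^{n}$ explicitly inside the localization and leveraging the non-monomial hypothesis on $a$ to rule it out. Once that is done, the grading match-up is immediate.
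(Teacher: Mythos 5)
Your argument is correct and follows essentially the same route as the paper: the paper likewise passes through two embeddings/localizations in which $x$ (resp.\ $y$) becomes invertible, invokes Corollary \ref{Units1265} to write the unit as $\alpha h^i x^j$ and as $\alpha' h^l y^k$, and concludes by comparing the two expressions via the $\mathbb{Z}$-grading. The only difference is that you spell out the final comparison explicitly (the formula for $y^{n}$ in the localization and the observation that $a(h)a(qh^{-1})$ is not a monomial when $a$ is not), a step the paper leaves to the reader.
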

\begin{proof}
Let $t$ be a unit of $\Bbbk[h^{\pm 1}](\sigma_{inv},a)$. Consider the two embeddings $$\phi:\Bbbk[h^{\pm 1}](\sigma_{inv},a)\hookrightarrow \Bbbk[u^{\pm 1}][v^{\pm 1},u\mapsto u^{-1}]$$ and $$\phi':\Bbbk[h^{\pm 1}](\sigma_{inv},a)\hookrightarrow \Bbbk[u^{\pm 1}][v^{\pm 1},u\mapsto u^{-1}]$$ defined in the following way:
$$\phi(x)=u,~~\phi(y)=a(v)u^{-1},~~\mathrm{and}~~\phi(h)=v$$   
and
$$\phi'(x)=u^{-1}a(v),~~\phi'(y)=u,~~\mathrm{and}~~\phi'(h)=v.$$  
From Corollary \ref{Units1265} and our embeddings we have that $t=\alpha h^i x^j$ and $t=\alpha' h^l y^k$ where $\alpha,\alpha'\in\Bbbk^*$ and $i,j,l,k\in\mathbb{Z}$. By comparing our expressions for $t$ and using the grading on $\Bbbk[h^{\pm 1}](\sigma_{inv},a)$ we get the desired result. 
\end{proof}

\subsection{Canonical Forms}

We now have all the properties required to prove that the families in Proposition \ref{prop_canonical_forms} are disjoint. For ease of reading, we repeat the statement of Proposition \ref{prop_canonical_forms} before working through the proof. 

\begin{proposition}\label{prop_canonical_forms4}
The generalized Weyl algebra $\Bbbk[h^{\pm 1}](\sigma, a)$ is isomorphic to one, and only one, of the following
\begin{enumerate}
\item $\Bbbk[h^{\pm 1}](\mathrm{id},a)$,
\item $\Bbbk[h^{\pm 1}](\sigma_q,a)$ where $\sigma_q(h)=qh$ and $q\neq 1$,
\item $\Bbbk[h^{\pm 1}](\sigma_{inv},a)$ where $\sigma_{inv}(h)=qh^{-1}$.
\end{enumerate}
\end{proposition}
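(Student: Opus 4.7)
My plan is to split the proposition into two sub-claims: \emph{existence}, that every $\Bbbk[h^{\pm 1}](\sigma,a)$ falls into at least one of the three families, and \emph{disjointness}, that it falls into at most one. For existence, I would note that every $\Bbbk$-algebra automorphism $\sigma$ of $\Bbbk[h^{\pm 1}]$ must send $h$ to a unit $\alpha h^n$ with $\alpha\in\Bbbk^*,\,n\in\mathbb{Z}$, and bijectivity forces $n=\pm 1$; the three resulting cases ($q=1$, $q\neq 1$, and $\sigma(h)=qh^{-1}$) are precisely families (1), (2), and (3).

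For disjointness, I would first separate family (1) from the other two by commutativity: with $\sigma=\mathrm{id}$ the defining relations reduce to $xr=rx$, $yr=ry$ and $xy=yx=a$, so $\Bbbk[h^{\pm 1}](\mathrm{id},a)$ is commutative, while in families (2) and (3) the relation $xh=\sigma(h)x$ makes $xh\neq hx$.

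Separating (2) from (3) is the heart of the matter, and I would split on whether $a$ is a monomial. If $a$ is a monomial, then $\Bbbk[h^{\pm 1}](\sigma_q,a)$ is the quantum torus $\mathcal{T}_q$ and, by Lemma \ref{iso1c}, $\Bbbk[h^{\pm 1}](\sigma_{inv},a)$ is the skew-Laurent ring $\Bbbk[v^{\pm 1}][u^{\pm 1};\sigma]$ with $\sigma(v)=v^{-1}$. A direct computation shows that any two units of $\mathcal{T}_q$ satisfy $uv=\lambda vu$ with $\lambda\in\Bbbk^*$, so the commutator subgroup of $\mathcal{T}_q^\times$ lies in $\Bbbk^*$; by contrast, the commutator $hxh^{-1}x^{-1}=h^2$ in the skew-Laurent ring is not a scalar, so the algebras have non-isomorphic unit groups. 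If $a$ is not a monomial, I would compare centers: Lemma \ref{fsddf5367} identifies $Z(\Bbbk[h^{\pm 1}](\sigma_{inv},a))$ with the commutative GWA $\Bbbk[h+qh^{-1}](\mathrm{id},a(h)a(qh^{-1}))$, which has Gelfand--Kirillov dimension two and unit group $\Bbbk^*$. A grading argument analogous to the one in the proof of Lemma \ref{fsddf5367} shows that the center of $\Bbbk[h^{\pm 1}](\sigma_q,a)$ is $\Bbbk$ when $q$ is not a root of unity, and otherwise contains the invertible element $h^n$ (where $n$ is the order of $q$). In either situation the center of family (2) differs from that of family (3), either in dimension or in unit group.

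The main obstacle is the sub-case where $q$ is a root of unity and $a$ is not a monomial, for then both centers have Gelfand--Kirillov dimension two and a naive dimension argument fails. The key observation is that $Z(\Bbbk[h^{\pm 1}](\sigma_q,a))$ always contains the non-trivial unit $h^n$, whereas $Z(\Bbbk[h^{\pm 1}](\sigma_{inv},a))$ is a commutative GWA over a polynomial ring and hence has only scalar units; this intrinsic invariant forces the two centers to be non-isomorphic.
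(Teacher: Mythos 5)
Your existence argument and your separation of family (1) by commutativity match the paper. For the remaining disjointness your route partly differs: in the monomial-versus-monomial case you distinguish the quantum torus from the skew-Laurent ring by the commutator subgroup of the unit group (contained in the central subgroup $\Bbbk^*$ in $\mathcal{T}_q^{\times}$, but containing the non-central element $h^2$ on the involution side), whereas the paper classifies the units on both sides and derives a contradiction by pushing a hypothetical isomorphism through the relation $xh=qh^{-1}x$; both work. Your center/central-unit comparison in the non-monomial case is essentially the paper's argument (trivial center when $q$ is not a root of unity; the central unit $h^n$ when it is).

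There is, however, a genuine gap. Your case split is on whether ``$a$'' is a monomial, which tacitly assumes the two algebras being compared carry the same defining element. Disjointness of the families requires showing that $\Bbbk[h^{\pm 1}](\sigma_q,a)$ is never isomorphic to $\Bbbk[h^{\pm 1}](\sigma_{inv},b)$ for \emph{independent} $a$ and $b$, and this is exactly how the paper argues, carrying separate symbols $a$ and $b$ throughout. The mixed case where $q$ is a root of unity of order $n$, $a$ is not a monomial, and $b$ is a monomial escapes both of your invariants: the center of $\Bbbk[h^{\pm 1}](\sigma_q,a)$ is then generated by $h^{\pm n}$, $x^n$, $y^n$, while the center of the skew-Laurent ring is $\Bbbk[h+h^{-1},x^{\pm 2}]$ (Lemma \ref{center3423}) --- both have Gelfand--Kirillov dimension two and unit group $\Bbbk^*\times\mathbb{Z}$, so neither ``dimension'' nor ``unit group of the center'' separates them; note in particular that your claim that the involution center has only scalar units fails once $b$ is a monomial, since $x^2$ is a non-scalar central unit there. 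The repair is within your own toolkit: when $a$ is not a monomial the full unit group of $\Bbbk[h^{\pm 1}](\sigma_q,a)$ is $\{\alpha h^i\}\cong\Bbbk^*\times\mathbb{Z}$, hence abelian, whereas the unit group of the skew-Laurent ring contains $h$ and $x$ with $xh=h^{-1}x$ and is non-abelian. The symmetric mixed case ($a$ a monomial, $b$ not) is already covered by your central-unit argument, since the center of $\mathcal{T}_q$ at a root of unity contains the unit $h^n$ while the center of $\Bbbk[h^{\pm 1}](\sigma_{inv},b)$ has only scalar units. With these two observations added, your proof is complete.
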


\begin{proof}
Every algebra of the form $\Bbbk[h^{\pm 1}](\mathrm{id},a)$ is commutative, whereas families 2 and 3 only contain noncommutative algebras. Thus, family 1 is disjoint with family 2 and family 3.

Suppose $q$ is not a root of unity. Since $\Bbbk[h^{\pm 1}](\sigma_q,a)$ has a trivial center it cannot be isomorphic to any algebra in family 3. 

Let $q$ be a $t^{\mathrm{th}}$ root of unity for $t\in\mathbb{N}$ and note $h^t$ is central and a unit in $\Bbbk[h^{\pm 1}](\sigma_q,a)$. When $b$ is not a monomial, $\Bbbk[h^{\pm 1}](\sigma_{inv},b)$ has no nontrivial central units (see Lemma \ref{fsddf5367} and Lemma \ref{fdkjh999990}). Thus, these algebras cannot be isomorphic.   

For the remainder of the proof we set $g:=h, u:=x,$ and $v:=y$ in $\Bbbk[h^{\pm 1}](\sigma_q,a)$ to make clear which elements belong to which ring. We also assume $b$ is a monomial.

When $a(h)$ is not a monomial any isomorphism between $\Bbbk[h^{\pm 1}](\sigma_{inv},b)$ and $\Bbbk[h^{\pm 1}](\sigma_q,a)$, say $\psi$, would necessarily have action $\psi(h)=\alpha g^i$ and $\psi(x)=\beta g^j$ for $\alpha,\beta\in\Bbbk^*$. Applying $\psi$ to the relation $xh=qh^{-1}x$ gives us
$$\beta g^j \alpha g^i= q \alpha g^i \beta g^{-j}.$$
Comparing coefficients implies that $j=0$, contradicting the assumption that $\psi$ is an isomorphism.

Finally, assume $a(h)$ is a monomial and $\psi$ is an isomorphism between $\Bbbk[h^{\pm 1}](\sigma_{inv},b)$ and $\Bbbk[h^{\pm 1}](\sigma_q,a)$. By Corollary \ref{Units1265} we have that $\psi(h)=\alpha g^i u^j$ and $\psi(x)=\beta g^l u^k$ for $\alpha,\beta\in\Bbbk^*$ and $i,j,l,k\in\mathbb{Z}$.
Applying $\psi$ to the relation $xh=qh^{-1}x$ gives $$\beta g^l u^k \alpha g^i u^j= q\alpha^{-1}u^{-j} g^{-i}\beta g^l u^k$$
which implies that $i=j=0$ which contradicts $\psi$ being an isomorphism.   
\end{proof}

\subsection{Isomorphisms}\label{isogsgsrd}

In \cite{Bavula1} and \cite{Vivas} the isomorphism problem was solved for quantum generalized Weyl algebras over a Laurent polynomial ring. We now solve the isomorphism problem for $\Bbbk[h^{\pm 1}](\sigma_{inv},a)$ for $a\in\Bbbk[h^{\pm 1}]$, completing the classification of isomorphisms for the noncommative families in Proposition \ref{prop_canonical_forms}.

To identify in which algebra we are working, we attach the subscript $i\in \{1,2\}$ to the generators of each generalized Weyl algebra. Namely, we consider $\Bbbk[h^{\pm 1}](\sigma_{inv},a_1)$ and $ \Bbbk[h^{\pm 1}](\sigma_{inv}',a_2)$ generated by $h_1,x_1,y_1$ and $h_2,x_2,y_2$ respectively where $\sigma_{inv}(h_1)=q_1 h_1$ and $\sigma_{inv}'(h_2)=q_2 h_2$.  
\begin{proposition}\label{theisotw}
Let $a_1,a_2\in \Bbbk[h^{\pm 1}]$. Then the algebras $\Bbbk[h_1^{\pm 1}](\sigma_{inv},a_1)$ and $\Bbbk[h_2^{\pm 1}](\sigma_{inv}',a_2)$ are isomorphic, if and only if $$a_2(q_2^{\tau}h_2^{(-1)^{\tau}})=p h_2^l a_1(\alpha h_2^{\epsilon})$$ where $p\in\Bbbk^*, l\in\mathbb{Z}, \tau\in\{0,1\}, \epsilon \in \{1,-1\}$, and $\alpha^2=q_1q_2^{-\epsilon}$.
\end{proposition}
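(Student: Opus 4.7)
The plan is to prove the two directions separately.

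For the sufficiency direction, given the identity $a_2(q_2^{\tau} h_2^{(-1)^{\tau}}) = p h_2^l a_1(\alpha h_2^{\epsilon})$ with $\alpha^2 = q_1 q_2^{-\epsilon}$, I will exhibit explicit isomorphisms $\psi$ sending $h_1 \mapsto \alpha h_2^{\epsilon}$ and either $(x_1, y_1) \mapsto (x_2,\, p^{-1} h_2^{-l} y_2)$ when $\tau = 0$ or $(x_1, y_1) \mapsto (y_2,\, p^{-1} h_2^{-l} x_2)$ when $\tau = 1$. Via direct computation, using that $\sigma_{inv}'$ is an involution, the defining relations of $\Bbbk[h_1^{\pm 1}](\sigma_{inv}, a_1)$ reduce to exactly the condition $\alpha^2 = q_1 q_2^{-\epsilon}$ (for the commutation $x_1 h_1 = q_1 h_1^{-1} x_1$) together with the given identity (for $y_1 x_1 = a_1(h_1)$ and $x_1 y_1 = a_1(q_1 h_1^{-1})$, the latter after substituting $h_2 \mapsto q_2 h_2^{-1}$ into the identity). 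Invertibility follows by producing an inverse of the same shape.

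For necessity, suppose $\psi$ is an isomorphism. The argument of Proposition \ref{prop_canonical_forms4} already shows, via Corollary \ref{Units1265} versus Lemma \ref{fdkjh999990}, that $a_1$ is a monomial if and only if $a_2$ is. If both are monomials, Lemma \ref{iso1c} identifies both algebras with $\Bbbk[h^{\pm 1}](\sigma_{inv}, 1)$ and the target identity is easily arranged by matching monomial degrees. So assume $a_1$ and $a_2$ are not monomials. Lemma \ref{fdkjh999990} then forces $\psi(h_1) = \alpha h_2^{\epsilon}$ with $\epsilon = \pm 1$. Applying $\psi$ to $x_1 h_1 = q_1 h_1^{-1} x_1$ and decomposing $\psi(x_1)$ in the $\mathbb{Z}$-grading of Section \ref{grading1}: since $\sigma_{inv}'$ is an involution, the even-graded components $c_n(h_2)$ must satisfy $c_n(h_2)(h_2^{\epsilon} - q_1 \alpha^{-2} h_2^{-\epsilon}) = 0$ and hence vanish, while any nonzero odd-graded component forces $q_2^{\epsilon} = q_1 \alpha^{-2}$, i.e., $\alpha^2 = q_1 q_2^{-\epsilon}$. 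The identical analysis applies to $\psi(y_1)$.

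To pin down $\psi$ further, I use that $\psi(y_1)\psi(x_1) = a_1(\alpha h_2^{\epsilon})$ lies in grading zero. Since the GWA is a $\mathbb{Z}$-graded domain, an extremal-grading argument applied to the product (with both factors supported in odd gradings) concentrates each in a single graded component: $\psi(x_1) = c(h_2) e_k$, $\psi(y_1) = d(h_2) e_{-k}$ for some odd $k$, where $e_j$ denotes $x_2^j$ or $y_2^{|j|}$ as appropriate. Applying the same reasoning to $\psi^{-1}$ and using $\psi \circ \psi^{-1}(x_2) = x_2$ forces $k = \pm 1$, producing the $\tau = 0$ and $\tau = 1$ cases respectively, and simultaneously forces $c(h_2)$ and $d(h_2)$ to be units of $\Bbbk[h_2^{\pm 1}]$. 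Expanding the product via $y_2 x_2 = a_2(h_2)$ (when $k = 1$) or $x_2 y_2 = a_2(q_2 h_2^{-1})$ (when $k = -1$) and absorbing the unit $d(h_2) c(q_2 h_2^{-1})$ into $p^{-1} h_2^{-l}$ yields the claimed identity. I expect the main obstacle to be the grading bookkeeping: invoking domain-ness carefully to run the extremal argument, and using surjectivity cleanly enough to rule out $|k| \geq 3$.
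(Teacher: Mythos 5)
Your proposal is correct and follows essentially the same route as the paper: the converse is an explicit isomorphism of the same shape, and the forward direction uses the monomial/non-monomial case split, the units classification to force $\psi(h_1)=\alpha h_2^{\epsilon}$, the $\mathbb{Z}$-grading applied to $x_1h_1=q_1h_1^{-1}x_1$ to kill even components and extract $\alpha^2=q_1q_2^{-\epsilon}$, concentration of $\psi(x_1),\psi(y_1)$ in single graded components from $\psi(y_1)\psi(x_1)=a_1(\alpha h_2^{\epsilon})$, and the $\psi^{-1}$ argument to force degree $\pm 1$. The only differences are cosmetic (where the unit $p^{-1}h_2^{-l}$ is placed), and your explicit mention of domain-ness in the extremal-grading step is if anything slightly more careful than the paper's.
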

\begin{proof}
Suppose that $\psi$ is an isomorphism from $\Bbbk[h_1^{\pm 1}](\sigma_{inv},a_1)$ to $\Bbbk[h_2^{\pm 1}](\sigma_{inv}',a_2)$. 

For the case where both $a_1$ and $a_2$ are monomials see Lemma \ref{iso1c}.

If $a_1$ is a monomial and $a_2$ is not a monomial then a contradiction arises since $x_1^2$ is a central unit in $\Bbbk[h_1^{\pm 1}](\sigma_{inv},a_1)$, and no central units exist in $\Bbbk[h_2^{\pm 1}](\sigma_{inv},a_2)$ (see Section \ref{center_and_units}).    

Assume $a_1$ and $a_2$ are not monomials. Since $h_1$ is a unit, and $\psi$ is isomophism so has an inverse, we have that $\psi(h_1)=\alpha h_2^{\epsilon}$ where $\alpha\in\Bbbk^*$ and $\epsilon \in \{1,-1\}$. Applying $\Gamma$ to the relation $x_1h_1=q_1 h_1^{-1} x_1$ we get 
$$\psi(x_1)\alpha h_2^{\epsilon}=q_1\alpha^{-1}h_2^{-\epsilon}\psi(x_1)$$
which, under rearrangement gives 
$$q_1^{-1}\alpha^2 h^{\epsilon} \psi(x_1) h_2^{\epsilon}=\psi (x_1).$$ 
Using the $\mathbb{Z}$-graded structure described in Section \ref{grading1} we set $\psi(x_1)=\displaystyle\sum_{n\in\mathbb{Z}}W_n$ where $W_n\in A_{m}$ for $n\in \mathbb{Z}$. Thus, we have 
$$\displaystyle\sum_{m\in\mathbb{Z}}q_1^{-1} q_2^{\epsilon} \alpha^2  h_2^{\epsilon+(-1)^n \epsilon}  W_n=\displaystyle\sum_{n\in\mathbb{Z}}W_n$$
implying that $W_n=0$ for $n$ even and $\alpha^2=q_1 q_2^{-\epsilon}$. Applying an analogous argument to $\psi(y_1)$ and $y_1h_1=q_2h_1^{-1}y_1$ gives us that $$\psi(y_1)=\displaystyle\sum_{n\in\mathbb{Z}}W'_n$$ where $W'_n\in A_{m}$ for $n\in \mathbb{Z}$ and $W'_n=0$ for $n$ even. Applying $\psi$ to the relation $y_1 x_1=a_1(h_1)$ gives 
$$\displaystyle\sum_{n\in\mathbb{Z}}W'_n \displaystyle\sum_{n\in\mathbb{Z}}W_n = a_1(\alpha h_2^{\epsilon}).$$
Since $a_1(\alpha h_2^{\epsilon})\in \Bbbk[h_2^{\pm 1}]$, we find that $\psi(x_1)=W_n$ and $\psi(y_1)=W'_{-n}$ for some odd integer $n$. Thus, we can write $\psi(x_1)=p(h_2)x_2^{(1-\tau)n}y_2^{\tau n}$ and $\psi(y_1)=y_2^{(1-\tau)n}x_2^{\tau n}p'(h_2)$ where $p(h_2),p'(h_2)\in\Bbbk[h_2^{\pm 1}]$ and $\tau\in \{0,1\}$. By considering how $\psi^{-1}$ would act on our expressions for $\psi(x_1)$ and $\psi(y_1)$ we can deduce $\psi(x_1)=ph_2^i x_2^{(1-\tau)}y_2^{\tau}$ and $\psi(y_1)= y_2^{(1-\tau)}x_2^{\tau}p'h_2^j$ for $p,p'\in \Bbbk^*$ and $i,j\in\mathbb{Z}$. Returning to the application of $\psi$ to the relation $y_1x_1=a_1(h_1)$ we get
$$q^{i+j}pp'h_2^{-i-j}a_2(q_2^\tau h_2^{(-1)^{\tau}})=a_1(\alpha h_2^{\epsilon}).$$

Conversely, for $p\in\Bbbk^*, l\in\mathbb{Z}, \epsilon \in \{1,-1\},\alpha^2=q_1q_2^{-\epsilon}$, and assume $a_2( h_2^{(-1)^{\tau}})=p h_2^l a_1(\alpha h_2^{\epsilon})$. The map $\psi:\Bbbk[h_1^{\pm 1}](\sigma_{inv},a_1)\rightarrow \Bbbk[h_2^{\pm 1}](\sigma_{inv}',a_2)$ defined on the generators of $\Bbbk[h_1^{\pm 1}](\sigma_{inv},a_1)$ as follows:
$$\psi(h_1)=\alpha h_2^{\epsilon},~~\psi(x_1)=p^{-1} q_2^{-l} h_2^l x_2^{(1-\tau)} y_2^{\tau},~~\mathrm{and}~~\psi(y_1)=y_2^{(1-\tau)}x_2^{\tau}$$
is an isomorphism. In checking that $\psi$ is consistent on the defining relations of $\Bbbk[h_1^{\pm 1}](\sigma_{inv},a_1)$ it is necessary to derive the equation $a_2(q_2^{(-1)^{\tau}} h_2^{-(-1)^{\tau}})=q_2^l p h_2^{-l} a_1(\alpha q_2^{\epsilon} h_2^{-\epsilon})$ by applying $\sigma_{inv}'$ to the relation $a_2( h_2^{(-1)^{\tau}})=p h_2^l a_1(\alpha h_2^{\epsilon})$.  
\end{proof}

\subsection{Laurent Polynomial Identities}
We close our preliminaries section by presenting a number of Laurent polynomials identities that will be useful in the classification of the derivations of $\Bbbk[h^{\pm 1}](\sigma_{inv},a)$ in Section \ref{IGWA2}. We suggest this section is referenced as required whilst reading Section \ref{IGWA2}, since taken without that context, the identities proven may seem somewhat arbitrary. Throughout this section, we assume that our base field $\Bbbk$ does not have characteristic $2$.

\begin{lemma}\label{newlem33}
Let $p(h),g(h)\in\Bbbk[h^{\pm 1}]$ such that $p(h)+p(h^{-1})=g(h)(h^{-1}-h)$ then there exists $m(h)\in\Bbbk[h^{\pm 1}]$ such that $p(h)=m(h)(h^{-1}-h)$.
\end{lemma}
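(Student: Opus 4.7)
The plan is to evaluate the hypothesis at the two zeros of $h^{-1}-h$, namely $h=1$ and $h=-1$, to force $p(1)=p(-1)=0$, and then factor.

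First I would substitute $h=1$ into the identity $p(h)+p(h^{-1})=g(h)(h^{-1}-h)$. The right-hand side vanishes, so $2p(1)=0$, and since $\mathrm{char}(\Bbbk)\neq 2$ we conclude $p(1)=0$. Substituting $h=-1$ similarly yields $2p(-1)=0$, hence $p(-1)=0$. Note that evaluation at $\pm 1$ is a well-defined $\Bbbk$-algebra homomorphism $\Bbbk[h^{\pm 1}]\to\Bbbk$ because $\pm 1$ are units, so this step is legitimate.

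Next I would lift the divisibility statement to the Laurent ring. Write $p(h)=h^{-N}P(h)$ with $P(h)\in\Bbbk[h]$ a genuine polynomial and $N\geq 0$. The conditions $p(\pm 1)=0$ translate to $P(\pm 1)=0$, so $(h-1)(h+1)=h^2-1$ divides $P(h)$ in $\Bbbk[h]$. Hence there exists $q(h)\in\Bbbk[h^{\pm 1}]$ with $p(h)=(h^2-1)q(h)$.

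Finally I would rewrite $h^2-1=-h(h^{-1}-h)$ to obtain
\[
p(h) = -h\,q(h)\,(h^{-1}-h),
\]
and set $m(h):=-h\,q(h)\in\Bbbk[h^{\pm 1}]$. This is the desired factorization. The argument contains no real obstacle; the only thing to watch is invoking the hypothesis $\mathrm{char}(\Bbbk)\neq 2$ at the two evaluations, which the author has already assumed at the start of the section.
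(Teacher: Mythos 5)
Your proof is correct and follows essentially the same route as the paper: both arguments evaluate the identity at $h=\pm 1$ to conclude (using $\mathrm{char}(\Bbbk)\neq 2$) that $1$ and $-1$ are roots of $p(h)$, and then factor out $h^{-1}-h$. You have simply made explicit the evaluation homomorphism and the passage through the polynomial part $P(h)$, which the paper leaves implicit.
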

\begin{proof}
By assumption $1$ and $-1$ are roots of $p(h)+p(h^{-1})$ and thus, $1$ and $-1$ are roots of $p(h)$ given our assumption that the characteristic of $\Bbbk$ not equal to 2. 
\end{proof}

\begin{lemma}\label{lemma1.2poliden1}
Let $p(h)\in\Bbbk[h^{\pm 1}]$, then $p(h)=-h^2p(h^{-1})$ if and only if $p(h)=(h^2-1)g(h)$, where $g(h)=g(h^{-1})$. 
\end{lemma}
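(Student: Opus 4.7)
The plan is to prove both implications, with the forward direction being a direct substitution and the reverse direction relying on evaluating at the roots $\pm 1$ of $h^2-1$ to force divisibility.

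For the forward direction, suppose $p(h)=(h^2-1)g(h)$ with $g(h)=g(h^{-1})$. Then
\[
-h^2 p(h^{-1}) = -h^2(h^{-2}-1)g(h^{-1}) = (h^2-1)g(h) = p(h),
\]
which is the claimed functional equation. This is a one-line check.

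For the reverse direction, assume $p(h) = -h^2 p(h^{-1})$. First I would evaluate the hypothesis at $h=1$ and $h=-1$, obtaining $p(1)=-p(1)$ and $p(-1)=-p(-1)$. Here is where the characteristic not equal to $2$ hypothesis enters: it lets me conclude $p(1)=p(-1)=0$. Writing $p(h)=h^{-N}\tilde{p}(h)$ for a genuine polynomial $\tilde{p}(h)\in\Bbbk[h]$ and a suitable $N\in\mathbb{Z}_{\geq 0}$, the same vanishing holds for $\tilde{p}$, so $h-1$ and $h+1$ both divide $\tilde{p}(h)$ in $\Bbbk[h]$. Hence $(h^2-1)$ divides $p(h)$ in $\Bbbk[h^{\pm 1}]$, and we may write $p(h)=(h^2-1)g(h)$ for some $g(h)\in\Bbbk[h^{\pm 1}]$.

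It remains to show this $g$ satisfies $g(h)=g(h^{-1})$. Substituting the factorization into the hypothesis gives
\[
(h^2-1)g(h) = p(h) = -h^2 p(h^{-1}) = -h^2(h^{-2}-1)g(h^{-1}) = (h^2-1)g(h^{-1}).
\]
Since $\Bbbk[h^{\pm 1}]$ is a domain and $h^2-1\neq 0$, cancelling yields $g(h)=g(h^{-1})$, completing the proof. I do not foresee any real obstacle; the only subtlety is being slightly careful that divisibility by $h^2-1$ in $\Bbbk[h^{\pm 1}]$ reduces cleanly to divisibility in $\Bbbk[h]$ after clearing the denominator $h^{-N}$, which is immediate because $h^2-1$ and $h$ are coprime.
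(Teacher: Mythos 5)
Your proof is correct and follows essentially the same route as the paper: the forward direction is the same direct substitution, and the reverse direction likewise uses the characteristic-not-2 hypothesis to force $p(\pm 1)=0$, factors out $h^2-1$, and then cancels it in the functional equation to obtain $g(h)=g(h^{-1})$. Your extra care in reducing Laurent-polynomial divisibility to ordinary polynomial divisibility is a minor tidying of a step the paper leaves implicit.
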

\begin{proof}
Assume $p(h)=(h^2-1)g(h)$, where $g(h)=g(h^{-1})$. Then 
\begin{align*}
p(h^{-1})=&(h^{-2}-1)g(h^{-1})\\
=&-h^{-2}(h^2-1)g(h)\\
=&-h^{-2}p(h)
\end{align*}
implying that $p(h)=-h^2p(h^{-1}).$

We now shift our attention to the sufficient condition. Assuming $p(h)=-h^2 p(h^{-1})$ and that the characteristic of $\Bbbk$ is not 2 gives us that $1$ and $-1$ are roots of $p(h)$. Therefore, we can write 
\begin{equation}\label{newlem43}
p(h)=g(h)(h^2-1). 
\end{equation}
Applying the automorphism of $\Bbbk[h^{\pm 1}]$ which sends $h\mapsto h^{-1}$ to Equation (\ref{newlem43}) gives us $$p(h^{-1})=g(h^{-1})(h^{-2}-1).$$ By assumption we can write $-h^{-2}p(h)=g(h^{-1})(h^{-2}-1).$ Combining this with Equation (\ref{newlem43}) implies that $g(h)=g(h^{-1}).$ 
\end{proof}

\begin{lemma}\label{lemsymanti}
Let $p(h)\in\Bbbk[h^{\pm 1}]$. There exists $g(h)\in\Bbbk[h^{\pm 1}]$ where $g(h)=-g(h^{-1})$, such that $p(h)-g(h)=p(h^{-1})-g(h^{-1}).$
\end{lemma}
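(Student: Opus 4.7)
The plan is to recognize that the desired conclusion is equivalent to decomposing $p(h)$ into a symmetric and an antisymmetric part with respect to the involution $h \mapsto h^{-1}$, which is possible in the module $\Bbbk[h^{\pm 1}]$ exactly because $\mathrm{char}(\Bbbk) \neq 2$. Rearranging the identity $p(h)-g(h)=p(h^{-1})-g(h^{-1})$ yields $p(h)-p(h^{-1}) = g(h)-g(h^{-1})$, and imposing the antisymmetry condition $g(h^{-1}) = -g(h)$ converts this to $p(h) - p(h^{-1}) = 2g(h)$. This forces the choice $g(h) = \tfrac{1}{2}\bigl(p(h) - p(h^{-1})\bigr)$.

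So the first step I would take is to \emph{define} the candidate $g(h) := \tfrac{1}{2}(p(h) - p(h^{-1}))$, which is a well-defined element of $\Bbbk[h^{\pm 1}]$ since $2$ is invertible in $\Bbbk$ by hypothesis. Next, I would verify the antisymmetry: substituting $h \mapsto h^{-1}$ gives $g(h^{-1}) = \tfrac{1}{2}(p(h^{-1}) - p(h)) = -g(h)$. Finally, I would verify the required identity by direct substitution, computing both sides as $\tfrac{1}{2}(p(h)+p(h^{-1}))$, which is manifestly invariant under $h \mapsto h^{-1}$.

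There is no real obstacle here: the content of the lemma is simply the averaging trick that splits any Laurent polynomial into its $\sigma_{inv}$-symmetric and $\sigma_{inv}$-antisymmetric parts. The only place where anything is used is the hypothesis that $\mathrm{char}(\Bbbk) \neq 2$, needed to invert $2$. I expect the entire proof to be essentially one line of definition followed by two one-line verifications.
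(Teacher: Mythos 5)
Your proposal is correct and is essentially the paper's own argument: the paper's choice $g(h)=\tfrac{1}{2}\sum p_i(h^i-h^{-i})+\tfrac{1}{2}\sum p_i'(h^{-i}-h^{i})$ is exactly your $g(h)=\tfrac{1}{2}\bigl(p(h)-p(h^{-1})\bigr)$ written out coefficient by coefficient, and both verifications proceed identically, using $\mathrm{char}(\Bbbk)\neq 2$ only to invert $2$. Your closed-form presentation is slightly cleaner but not a different proof.
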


\begin{proof}
Since $p(h)\in\Bbbk[h^{\pm 1}]$, we can write $p(h)=\displaystyle\sum_{i=1}^d p_ih^{i}+p_0+\displaystyle\sum_{i=1}^e p_i' h^{-i}$ where $p_i,p_i'\in\Bbbk$. Let $$g(h)=\frac{1}{2}\displaystyle\sum_{i=1}^d p_i(h^{i}-h^{-i})+\frac{1}{2}\displaystyle\sum_{i=1}^e p_i' (h^{-i}-h^{i}).$$
Direct calculation shows that $g(h)=-g(h^{-1}).$ We can see that 
\begin{align*}
p(h)-g(h)=&\frac{1}{2}\displaystyle\sum_{i=1}^d p_i(h^{i}+h^{-i})+p_0+\frac{1}{2}\displaystyle\sum_{i=1}^e p_i' (h^{i}+h^{-i})\\
&=p(h^{-1})-g(h^{-1}).
\end{align*}
\end{proof}
\begin{remark}
It is useful to note that for $p(h)\in\Bbbk[h^{\pm 1}]$, the condition $p(h)=-p(h^{-1})$ is equivalent to being able to write $p(h)=t(h)-t(h^{-1})$ for some $t(h)\in\Bbbk[h^{\pm 1}]$. 
\end{remark}

\section{Derivations of Involution Generalized Weyl Algebras}\label{IGWA2}
Throughout this section we assume that our base field $\Bbbk$ does not have characteristic $2$.

\subsection{Derivations of $\Bbbk[h^{\pm 1}](\sigma_{inv},1)$}

In this section we will determine the derivations of $\Bbbk[h^{\pm 1}](\sigma_{inv},1)$ where $\sigma_{inv}(h)=h^{-1}$. We will first determine the action of inner derivations.
\begin{lemma}\label{adDer1}
Let $t\in \Bbbk[h^{\pm 1}](\sigma_{inv},1)$, where $t=\displaystyle\sum_{k\geq 0} t_k(h)x^{k}+\displaystyle\sum_{k\geq 1} t_k'(h)y^{k}.$ Every inner derivation of $\Bbbk[h^{\pm 1}](\sigma_{inv},1)$, denoted $\mathrm{ad}_t$, has action
\begin{alignat*}{3}
\mathrm{ad}_t(h) &= \displaystyle\sum_{k\geq 0} (h^{-1}-h)t_{2k+1}(h)x^{2k+1}+\displaystyle\sum_{k\geq 1} (h^{-1}-h)t_{2k+1}'(h)y^{2k+1},\\
\mathrm{ad}_t(x) &= \displaystyle\sum_{k\geq0}\left[t_{k}(h)-t_{k}(h^{-1})\right]x^{k+1}+\displaystyle\sum_{k\geq 1} \left[t_{k}'(h)-t_{k}'(h^{-1})\right]y^{k-1},\\
\mathrm{ad}_t(y) &=
\displaystyle\sum_{k\geq0}\left[t_{k+1}(h)-t_{k+1}(h^{-1})\right]x^{k}+\displaystyle\sum_{k\geq1}\left[t_{k}'(h)-t_{k}'(h^{-1})\right]y^{k+1}+\left[t_{0}(h)-t_{0}(h^{-1})\right]y.
\end{alignat*}
\end{lemma}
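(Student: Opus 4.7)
The plan is a direct computation in $\Bbbk[h^{\pm 1}](\sigma_{inv},1)$ using the defining relations, applied termwise to the graded decomposition of $t$. The key preliminary observations are: (i) because $\sigma_{inv}$ is an involution, $\sigma_{inv}^{-1} = \sigma_{inv}$, so both $x$ and $y$ satisfy $xh = h^{-1}x$ and $yh = h^{-1}y$, and iterating gives $x^k h = h^{(-1)^k} x^k$ and $y^k h = h^{(-1)^k} y^k$ for all $k \geq 0$; and (ii) since $a = 1$, the relations $yx = xy = 1$ yield $y^k x = y^{k-1}$, $x^k y = x^{k-1}$, $xy^k = y^{k-1}$, and $yx^k = x^{k-1}$ for $k \geq 1$.

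For $\mathrm{ad}_t(h) = th - ht$, I would push $h$ past each monomial $x^k$ and $y^k$ using (i). The contribution of the $k$th term of the $x$-part is $t_k(h)(h^{(-1)^k} - h)x^k$, which vanishes for $k$ even and equals $(h^{-1} - h)t_k(h)x^k$ for $k$ odd; the $y$-part is treated identically. Collecting odd indices and reindexing $k = 2j + 1$ gives the stated expression.

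For $\mathrm{ad}_t(x) = tx - xt$, I would use $x \cdot r(h) = r(h^{-1})x$ to commute $x$ past the coefficient polynomials, together with $y^k x = y^{k-1}$ and $x y^k = y^{k-1}$ for $k \geq 1$. The $x$-part contributes $[t_k(h) - t_k(h^{-1})]x^{k+1}$ for each $k \geq 0$, while the $y$-part contributes $[t_k'(h) - t_k'(h^{-1})]y^{k-1}$ for each $k \geq 1$, matching the claimed formula.

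The computation of $\mathrm{ad}_t(y) = ty - yt$ is entirely analogous, now using $y \cdot r(h) = r(h^{-1})y$ (by $\sigma_{inv}^{-1} = \sigma_{inv}$) together with $x^k y = x^{k-1}$ and $y x^k = x^{k-1}$ for $k \geq 1$. The only bookkeeping subtleties are that the $k = 0$ term $t_0(h)$ of the $x$-sum in $t$ does not cancel against a lower monomial and so produces the isolated last summand $[t_0(h) - t_0(h^{-1})]y$, and that the middle sum requires the reindexing $k \mapsto k+1$ to appear in the form written in the lemma. No genuine obstacle is expected; the proof is pure calculation in the Ore-type commutation rules, and the only care required is tracking the exceptional $k = 0$ contribution and the index shift.
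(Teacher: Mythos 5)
Your proposal is correct and follows essentially the same route as the paper: a direct termwise computation using $x^k h = h^{(-1)^k}x^k$, $y^k h = h^{(-1)^k}y^k$, $xr(h) = r(h^{-1})x$, $yr(h) = r(h^{-1})y$, and the cancellations coming from $xy = yx = 1$ (the paper writes out only the $\mathrm{ad}_t(y)$ case and asserts the other two are analogous, exactly as you do). Your bookkeeping of the exceptional $k=0$ term and the index shift matches the paper's calculation.
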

\begin{proof}
We only show the calculation for $\mathrm{ad}_t(y)$ since $\mathrm{ad}_t(h)$ and $\mathrm{ad}_t(x)$ follow using the same reasoning. By definition $\mathrm{ad}_t(y)=ty-yt.$ Substituting for $t$ with the general element of $\Bbbk[h^{\pm 1}](\sigma_{inv},1)$, we get 
\begin{align*}
\mathrm{ad}_t(y)&=\left[\displaystyle\sum_{k\geq 0} t_k(h)x^{k}+\displaystyle\sum_{k\geq 1} t_k'(h)y^{k}\right] y -y \left[\displaystyle\sum_{k\geq 0} t_k(h)x^{k}+\displaystyle\sum_{k\geq 1} t_k'(h)y^{k}\right]\\
&=\displaystyle\sum_{k\geq 1} \left[t_k(h)-t_k(h^{-1})\right]x^{k-1}+\displaystyle\sum_{k\geq 1} \left[t_k'(h)-t_k'(h^{-1})\right]y^{k+1}  +[t_0(h)-t_0(h^{-1})]y.
\end{align*}
Reindexing, we get 
$$\mathrm{ad}_t(y) =
\displaystyle\sum_{k\geq0}\left[t_{k+1}(h)-t_{k+1}(h^{-1})\right]x^{k}+\displaystyle\sum_{k\geq1}\left[t_{k}'(h)-t_{k}'(h^{-1})\right]y^{k+1}+\left[t_{0}(h)-t_{0}(h^{-1})\right]y.$$
\end{proof}

\begin{lemma}\label{specialder21}
The map $\Dmath_{z_1, z_2}$ defined on the generators of $\Bbbk[h^{\pm 1}](\sigma_{inv},1)$ as follows
\begin{align*}
\Dmath_{z_1, z_2}(h) &= (h^2-1) z_1,\\
\Dmath_{z_1, z_2}(x) &= z_2 x,\\
\Dmath_{z_1, z_2}(y) &= -z_2 y,
\end{align*}
where $z_1, z_2\in Z(\Bbbk[h^{\pm 1}](\sigma_{inv},1))$, is an outer derivation of $\Bbbk[h^{\pm 1}](\sigma_{inv},1)$. 
\end{lemma}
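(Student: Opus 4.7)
The plan is to split the argument into two parts: first verify that the given assignments on the generators extend consistently to a derivation of $\Bbbk[h^{\pm 1}](\sigma_{inv},1)$, and second show that whenever $\Dmath_{z_1,z_2}$ coincides with an inner derivation one must have $z_1=z_2=0$, so that $\Dmath_{z_1,z_2}$ is inner only when it is the zero map.

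For the first part, the defining relations of $\Bbbk[h^{\pm 1}](\sigma_{inv},1)$ are $hh^{-1}=1$, $yx=xy=1$, $xh=h^{-1}x$, and $yh=h^{-1}y$. Applying Leibniz to $hh^{-1}=1$ extracts $\Dmath_{z_1,z_2}(h^{-1}) = (h^{-2}-1)z_1$, after which the remaining relations are checked by direct calculation, using the centrality of $z_1$ and $z_2$ together with $x(h^2-1) = (h^{-2}-1)x$ and the analogous identity for $y$. For example, applying $\Dmath_{z_1,z_2}$ to $xh = h^{-1}x$ yields $z_2 h^{-1}x + (h^{-2}-1)z_1 x$ on both sides, and applying it to $yx = 1$ reduces via centrality of $z_2$ to $-z_2 + z_2 = 0$. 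By the universal property of the generalized Weyl algebra, $\Dmath_{z_1,z_2}$ then extends uniquely to a $\Bbbk$-algebra derivation.

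For the outer claim, suppose $\Dmath_{z_1,z_2} = \mathrm{ad}_t$ with $t = \sum_{k\geq 0}t_k(h)x^k + \sum_{k\geq 1}t_k'(h)y^k$, and exploit the $\mathbb{Z}$-grading from Section \ref{grading1} component by component. By Lemma \ref{adDer1}, $\mathrm{ad}_t(h)$ is supported in odd degree only, whereas by Lemma \ref{center3423} the center lies in $\Bbbk[h+h^{-1},x^{\pm 2}]$, so $(h^2-1)z_1$ has only even-degree components. Matching $\Dmath_{z_1,z_2}(h) = (h^2-1)z_1$ therefore forces $(h^2-1)z_1 = 0$, and since $\Bbbk[h^{\pm 1}](\sigma_{inv},1)$ is a domain (a skew-Laurent extension by Lemma \ref{iso1c}) we conclude $z_1 = 0$. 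Next, matching $\Dmath_{z_1,z_2}(x) = z_2 x$ with $\mathrm{ad}_t(x)$ graded piece by graded piece, the coefficients on the left lie in $\Bbbk[h+h^{-1}]$ and are therefore symmetric under $h\mapsto h^{-1}$, while the coefficients on the right have the form $f(h) - f(h^{-1})$ and are therefore antisymmetric. Since $\mathrm{char}\,\Bbbk\neq 2$, each such coefficient must vanish, so $z_2=0$.

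The main obstacle will be this second half: the relation-checking is mechanical, but separating $\Dmath_{z_1,z_2}$ from inner derivations requires identifying the right structural invariant in each graded component. The $z_1$-argument hinges on the parity of the grading, whereas the $z_2$-argument relies on the finer symmetric/antisymmetric dichotomy under $h\mapsto h^{-1}$, reflecting the interplay between the description of the center in Lemma \ref{center3423} and the explicit form of $\mathrm{ad}_t(x)$ in Lemma \ref{adDer1}.
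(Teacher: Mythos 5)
Your proposal is correct and follows essentially the same route as the paper: verify the defining relations directly, then use the structure of inner derivations from Lemma \ref{adDer1} (odd-degree support of $\mathrm{ad}_t(h)$, and antisymmetry $p(h)=-p(h^{-1})$ of the coefficients of $\mathrm{ad}_t(x)$) against the even-degree, symmetric coefficients coming from central $z_1,z_2$. Your version is in fact slightly more complete than the paper's, since you explicitly deduce $z_1=z_2=0$ using the domain property and $\mathrm{char}\,\Bbbk\neq 2$, and note that the map is inner only when it is zero.
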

\begin{proof}
For simplicity, we set $\textrm{D}:=\Dmath_{z_1, z_2}$ for the duration of this proof. First, we check that $\textrm{D}$ preserves the defining relations of $\Bbbk[h^{\pm 1}](\sigma_{inv},1)$.
Applying $\textrm{D}$ to $xh$, we see that 
\begin{align*}
\textrm{D}(xh) &= \textrm{D}(x)h+x\textrm{D}(h)\\
&= z_2xh+ x(h^2-1)z_1\\
&= h^{-1}z_2x + (h^{-2}-1)z_1 x\\
&= h^{-1}\textrm{D}(x) + h^{-1} \textrm{D}(x)\\
& = \textrm{D}(h^{-1}x).
\end{align*}
Applying $\textrm{D}$ to $yx$, we see that
\begin{align*}
\textrm{D}(yx) &= \textrm{D}(y)x+y\textrm{D}(x)\\
&=-z_2yx+yz_2x\\
&=0\\
&=\textrm{D}(1).
\end{align*}
The relations $yh=h^{-1}y$ and $xy=1$ follow similarly. 

To see that $\textrm{D}$ is an outer derivation we first note that $\mathrm{ad}_t(h)$ is the sum of elements from odd graded slices and all terms in $\mathrm{ad}_t(x)$ and $\mathrm{ad}_t(y)$ have coefficient polynomials of the form $p(h)=-p(h^{-1})$ (see Lemma \ref{adDer1}). Thus, we cannot represent $\textrm{D}$ as a sum of adjoint derivations.
\end{proof}

\begin{proposition}\label{Prop1cx}
Every derivation of $\Bbbk[h^{\pm 1}](\sigma_{inv},1)$ can be uniquely written as $\mathrm{ad}_t+\Dmath_{z_1,z_2}$ where $t\in\mathcal{T}_q$ and $\Dmath_{z_1,z_2}$ is defined as in Lemma \ref{specialder21}.
\end{proposition}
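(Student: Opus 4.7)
Since $a=1$ gives $y=x^{-1}$, a derivation $D$ of $A=\Bbbk[h^{\pm1}](\sigma_{inv},1)$ is determined by $D(h)$ and $D(x)$, with $D(y)$ forced by $D(xy)=0$. Using the $\mathbb{Z}$-grading of Section \ref{grading1} and folding powers of $y$ into negative powers of $x$, I would write $D(h)=\sum_{k\in\mathbb{Z}}a_k(h)x^k$ and $D(x)=\sum_{k\in\mathbb{Z}}c_k(h)x^k$ with $a_k,c_k\in\Bbbk[h^{\pm1}]$. The strategy is to extract structural constraints on the $a_k$ and $c_k$ by applying $D$ to the relation $xh=h^{-1}x$, then subtract an explicit inner derivation in two stages to reduce $D$ to the shape of $\Dmath_{z_1,z_2}$.

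Comparing graded components of $D(xh)=D(h^{-1}x)$ splits into two families. For $m$ odd the constraint isolates $a_{m-1}(h)=-h^2 a_{m-1}(h^{-1})$ (for $m-1$ even), which by Lemma \ref{lemma1.2poliden1} forces $a_{m-1}(h)=(h^2-1)g_{m-1}(h)$ with $g_{m-1}\in\Bbbk[h+h^{-1}]$. For $m$ even it gives $a_{m-1}(h)+a_{m-1}(h^{-1})=c_m(h)(h^{-1}-h)$ (for $m-1$ odd), so Lemma \ref{newlem33} yields $a_{m-1}(h)=(h^{-1}-h)m_{m-1}(h)$ and pins down $c_m(h)=m_{m-1}(h)-m_{m-1}(h^{-1})$. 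Using $y=x^{-1}$ to rewrite Lemma \ref{adDer1} in the unified form $\mathrm{ad}_t(h)=\sum_{j\text{ odd}}(h^{-1}-h)T_j(h)x^j$ and $\mathrm{ad}_t(x)=\sum_{j\in\mathbb{Z}}[T_{j-1}(h)-T_{j-1}(h^{-1})]x^j$, the key observation is that the choice $T_j:=m_j$ for odd $j$ simultaneously kills the odd-graded part of $D(h)$ and, by exactly the identity $c_{j+1}=m_j(h)-m_j(h^{-1})$ above, the even-graded part of $D(x)$ in a single stroke. With the even-indexed $T_j$ still free, Lemma \ref{lemsymanti} lets me split each remaining odd-indexed $c_{j+1}$ into a symmetric plus an antisymmetric part under $h\mapsto h^{-1}$, and a further subtraction cancels the antisymmetric piece. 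The resulting derivation has $D(h)=(h^2-1)z_1$ and $D(x)=z_2x$ with $z_1,z_2$ lying in the centre $\Bbbk[h+h^{-1},x^{\pm2}]$ by Lemma \ref{center3423}, and $D(y)=-z_2y$ follows from $D(xy)=0$, identifying the residual derivation with $\Dmath_{z_1,z_2}$.

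For uniqueness it suffices to show that if $\Dmath_{w_1,w_2}=\mathrm{ad}_u$ for some $u\in A$ then $w_1=w_2=0$. The image $(h^2-1)w_1$ of $h$ is supported on even graded slices with coefficients symmetric under $h\mapsto h^{-1}$, whereas Lemma \ref{adDer1} forces $\mathrm{ad}_u(h)$ to live on odd graded slices with coefficients divisible by $(h^{-1}-h)$; matching slice by slice forces both to vanish and hence $w_1=0$. The same symmetric-versus-antisymmetric comparison applied to $w_2x=\mathrm{ad}_u(x)$ forces $w_2=0$, after which $\mathrm{ad}_u=0$, i.e., $u$ is central. The main technical point, and the one piece of book-keeping on which the whole argument turns, is the identity $c_{j+1}=m_j(h)-m_j(h^{-1})$: it is what makes one choice of odd-indexed $T_j$ cancel two different homogeneous components of $D$ at once, so that the two reduction stages are genuinely compatible and the remaining even-indexed $T_j$ are free to clean up the odd components of $D(x)$ without disturbing $D(h)$.
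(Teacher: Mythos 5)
Your proposal is correct and follows essentially the same route as the paper's proof: the same constraints from applying $\mathrm{D}$ to $xh=h^{-1}x$, resolved by Lemmas \ref{newlem33}, \ref{lemma1.2poliden1} and \ref{lemsymanti}, followed by the same two-stage subtraction of inner derivations (your odd-indexed $T_j=m_j$ is exactly the paper's $t_1$, and your even-indexed correction is the paper's $t_2$). The only differences are cosmetic --- you fold $y=x^{-1}$ into a single $\mathbb{Z}$-indexed sum, and you spell out the uniqueness/outerness comparison that the paper delegates to Lemma \ref{specialder21}.
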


\begin{proof}
Let $\textrm{D}$ be a derivation of $\Bbbk[h^{\pm 1}](\sigma_{inv},1)$. By picking generic elements in $\Bbbk[h^{\pm 1}](\sigma_{inv},1)$ (as explained in Subsection \ref{grading1}), we get the the following action on the generators of $\Bbbk[h^{\pm 1}](\sigma_{inv},1)$: 
\begin{align*}
\textrm{D}(h) &= \displaystyle\sum_{k\geq 0}w_{k}(h)x^k + \displaystyle\sum_{k\geq 1}w_{k}'(h)y^k,\\
\textrm{D}(x) &= \displaystyle\sum_{k\geq 0}u_{k}(h)x^k + \displaystyle\sum_{k\geq 1}u_{k}'(h)y^k,\\
\textrm{D}(y) &= \displaystyle\sum_{k\geq 0}v_{k}(h)x^k + \displaystyle\sum_{k\geq 1}v_{k}'(h)y^k.
\end{align*}
Applying $\textrm{D}$ to the relation $yx=1$ gives us
\begin{equation}\label{fwafife2}
\textrm{D}(y)x+y\textrm{D}(x)=0. 
\end{equation}
Substituting for $\textrm{D}(y)$ and $\textrm{D}(x)$ in Equation (\ref{fwafife2}) we get 
\begin{equation}\label{eq31der}
\left[\displaystyle\sum_{k\geq 0}v_{k}(h)x^k + \displaystyle\sum_{k\geq 1}v_{k}'(h)y^k \right]x+y\left[\displaystyle\sum_{k\geq 0}u_{k}(h)x^k + \displaystyle\sum_{k\geq 1}u_{k}'(h)y^k \right] = 0.
\end{equation}
Rearranging Equation (\ref{eq31der}) we get
\begin{equation}\label{Eq134343}
\displaystyle\sum_{k\geq 0}v_{k}(h)x^{k+1} + \displaystyle\sum_{k\geq 1}v_{k}'(h)y^{k-1} +\displaystyle\sum_{k\geq 0}u_{k}(h^{-1})x^{k-1} + \displaystyle\sum_{k\geq 1}u_{k}'(h^{-1})y^{k+1}  = 0.
\end{equation}
By comparing coefficients in Equation (\ref{Eq134343}), we derive the four following relations:
\begin{align}\label{listrel1}
&v_1'(h)=-u_1(h^{-1}),\nonumber\\ 
&v_2'(h)= -u_{0}(h^{-1}),\nonumber\\
&v_{k-1}(h)=-u_{k+1}(h^{-1}),~\mathrm{for}~k\geq 1,\\
&v_{k+1}'(h)=-u_{k-1}'(h^{-1}),~\mathrm{for}~k> 1.\nonumber
\end{align}
Applying $\textrm{D}$ to the relation $xh=h^{-1}x$ we get
$$\textrm{D}(x)h+x\textrm{D}(h)=\textrm{D}(h^{-1})x+h^{-1}\textrm{D}(x).$$
Noting that $\textrm{D}(h^{-1})=-h^{-1}\textrm{D}(h)h^{-1}$ and substituting for $\textrm{D}(x)$ and $\textrm{D}(h)$ gives us

\begin{align}\label{Eq3423}
&\left[\displaystyle\sum_{k\geq 0}u_{k}(h)x^k + \displaystyle\sum_{k\geq 1}u_{k}'(h)y^k \right]h+x\left[\displaystyle\sum_{k\geq 0}w_{k}(h)x^k + \displaystyle\sum_{k\geq 1}w_{k}'(h)y^k\right]\\
&=-h^{-1}\left[\displaystyle\sum_{k\geq 0}w_{k}(h)x^k + \displaystyle\sum_{k\geq 1}w_{k}'(h)y^k\right]h^{-1}x+h^{-1}\left[\displaystyle\sum_{k\geq 0}u_{k}(h)x^k + \displaystyle\sum_{k\geq 1}u_{k}'(h)y^k \right].\nonumber
\end{align}
By comparing coefficients in Equation (\ref{Eq3423}) we derive the following five relations:
\begin{align}
&u_0(h)(h^{-1}-h)=w_1'(h)+w_1'(h^{-1}),\label{listofrel1}\\
&u_{2k}(h)(h^{-1}-h)= w_{2k-1}(h)+w_{2k-1}(h^{-1}),~\textrm{for}~ k\geq 1,\label{listofrel2}\\
&u_{2k}'(h)(h^{-1}-h) =  w_{2k+1}'(h)+w_{2k+1}'(h^{-1}),~\textrm{for}~ k\geq 1,\label{listofrel3}\\
&w_{2k}(h^{-1}) = -h^{-2}w_{2k}(h),~\textrm{for}~ k\geq 0,\label{listofrel4}\\
&w_{2k+2}'(h^{-1}) =- h^{-2}w_{2k+2}'(h),~\textrm{for}~ k\geq 0.\label{listofrel5}
\end{align}
By applying Lemma \ref{newlem33} to Equation (\ref{listofrel1}), Equation (\ref{listofrel2}), and Equation (\ref{listofrel3}) and applying Lemma \ref{lemma1.2poliden1} to Equation (\ref{listofrel4}) and Equation (\ref{listofrel5}) gives us    
\begin{align}\label{listrel2}
&u_0(h)=\frac{w_1'(h)+w_1'(h^{-1})}{(h^{-1}-h)},\nonumber\\
&u_{2k}(h)= \frac{w_{2k-1}(h)+w_{2k-1}(h^{-1})}{(h^{-1}-h)},~\textrm{for}~ k\geq 1,\nonumber\\
&u_{2k}'(h) = \frac{ w_{2k+1}'(h)+w_{2k+1}'(h^{-1})}{(h^{-1}-h)},~\textrm{for}~ k\geq 1,\\
&w_{2k}(h)=(h^{2}-1)z_{k}(h),~\textrm{for}~ k\geq 0,\nonumber\\
&w_{2k+2}'(h)=(h^{2}-1)z_{k}'(h),~\textrm{for}~ k\geq 0\nonumber
\end{align}
for $z_{k}(h),z_{k}'(h)\in Z(\Bbbk[h^{\pm 1}](\sigma_{inv},1))$.
Combining the relations displayed in Equation (\ref{listrel1}) and Equation (\ref{listrel2}), we find
\begin{align*}
\textrm{D}(h) &= (h^2-1)z_1 + \displaystyle\sum_{k\geq 0}w_{2k+1}(h)x^{2k+1} + \displaystyle\sum_{k\geq 0}w_{2k+1}'(h)y^{2k+1},\\
\textrm{D}(x) &= \displaystyle\sum_{k\geq 0} \frac{w_{2k+1}(h)+w_{2k+1}(h^{-1})}{(h^{-1}-h)}x^{2k+2}+ \displaystyle\sum_{k\geq 0} \frac{w_{2k+1}'(h)+w_{2k+1}'(h^{-1})}{(h^{-1}-h)}y^{2k}\\
&\hspace{4.3cm}+ \displaystyle\sum_{k\geq 0}  u_{2k+1}(h)x^{2k+1} +\displaystyle\sum_{k\geq 0}  u_{2k+1}'(h)y^{2k+1},\\
\textrm{D}(y) &= \displaystyle\sum_{k\geq 0} \frac{w_{2k+1}(h)+w_{2k+1}(h^{-1})}{(h^{-1}-h)}x^{2k}+ \displaystyle\sum_{k\geq 0} \frac{w_{2k+1}'(h)+w_{2k+1}'(h^{-1})}{(h^{-1}-h)}y^{2k+2}\\
&\hspace{1.75cm}-  \displaystyle\sum_{k\geq 1}  u_{2k+1}(h^{-1})x^{2k-1} -\displaystyle\sum_{k\geq 1} u_{2k-1}'(h^{-1})y^{2k+1}-u_1(h^{-1})y,
\end{align*}
where $z_1 \in Z(\Bbbk[h^{\pm 1}](\sigma_{inv},1)).$

We will now describe two derivations which will allow us to simplify the form of $\textrm{D}$. Consider the inner derivation $\mathrm{ad}_{t_1}$ where $t_1=\displaystyle\sum_{k\geq 0}\frac{w_{2k+1}(h)}{(h^{-1}-h)}x^{2k+1} + \displaystyle\sum_{k\geq 0}\frac{w_{2k+1}'(h)}{(h^{-1}-h)}y^{2k+1}$. Namely, $\mathrm{ad}_{t_1}$ has action

\begin{align*}
\mathrm{ad}_{t_1}(h) &= \displaystyle\sum_{k\geq 0}w_{2k+1}(h)x^{2k+1} + \displaystyle\sum_{k\geq 0}w_{2k+1}'(h)y^{2k+1},\\
\mathrm{ad}_{t_1}(x) &= \displaystyle\sum_{k\geq 0} \frac{w_{2k+1}(h)+w_{2k+1}(h^{-1})}{(h^{-1}-h)}x^{2k+2}+ \displaystyle\sum_{k\geq 0} \frac{w_{2k+1}'(h)+w_{2k+1}'(h^{-1})}{(h^{-1}-h)}y^{2k},\\
\mathrm{ad}_{t_1}(y) &=  \displaystyle\sum_{k\geq 0} \frac{w_{2k+1}(h)+w_{2k+1}(h^{-1})}{(h^{-1}-h)}x^{2k}+ \displaystyle\sum_{k\geq 0} \frac{w_{2k+1}'(h)+w_{2k+1}'(h^{-1})}{(h^{-1}-h)}y^{2k+2}.
\end{align*}
Consider the derivation $\Dmath_{z_1,0}$. Namely, $\Dmath_{z_1,0}$ has action
\begin{align*}
\Dmath_{z_1,0}(h) &= (h^2-1)z_1,\\
\Dmath_{z_1,0}(x) &= 0,\\
\Dmath_{z_1,0}(y) &=  0.
\end{align*}
We now compute the action of $\Dmath_1:=\Dmath-\mathrm{ad}_{t_{1}}-\Dmath_{z_1,0}$ on the generators of $\Bbbk[h^{\pm 1}](\sigma_{inv},1)$. We see that
\begin{align*}
\Dmath_1(h)&=0,\\
\Dmath_1(x)&= \displaystyle\sum_{k\geq 0}  u_{2k+1}(h)x^{2k+1}+\displaystyle\sum_{k\geq 0}  u_{2k+1}'(h)y^{2k+1},\\
\Dmath_1(y)&= -  \displaystyle\sum_{k\geq 1}  u_{2k+1}(h^{-1})x^{2k-1}-\displaystyle\sum_{k\geq 1}  u_{2k-1}'(h^{-1})y^{2k+1}-u_1(h^{-1})y.
\end{align*}
Applying Lemma \ref{lemsymanti} allows us to pick $t_2=\displaystyle\sum_{k\geq 0} t_{2k}(h)x^{2k}+\displaystyle\sum_{k\geq 1} t_{2k}'(h)y^{2k}$ such that
\begin{align*}
u_{2k+1}(h)-t_{2k}(h)+t_{2k}(h^{-1}) &= z_{2k+1}(h)\in Z(\Bbbk[h^{\pm 1}](\sigma_{inv},1)),\\
u'_{2k+1}(h)-t_{2k}'(h)+t_{2k}'(h^{-1}) &= z_{2k+1}'(h)\in Z(\Bbbk[h^{\pm 1}](\sigma_{inv},1))
\end{align*}
for $k\geq 0$. Thus, we can write $\Dmath_2=\Dmath_1-\mathrm{ad}_{t_{2}}$ in the following way:
\begin{align*}
\Dmath_2(h)&=0,\\
\Dmath_2(x)&= \displaystyle\sum_{k\geq 0}  z_{2k+1}(h)x^{2k+1}+\displaystyle\sum_{k\geq 0} z_{2k+1}'(h)y^{2k+1},\\
\Dmath_2(y)&= -  \displaystyle\sum_{k\geq 1}  z_{2k+1}(h)x^{2k-1}-\displaystyle\sum_{k\geq 0}  z_{2k+1}'(h)y^{2k+1}.
\end{align*}
Simplifying, we get
\begin{align*}
\Dmath_2(h)&=0,\\
\Dmath_2(x)&= z_2 x,\\
\Dmath_2(y)&= - z_2y,
\end{align*}
where $z_2\in Z(\Bbbk[h^{\pm 1}](\sigma_{inv},1))$.
Finally, we can express $\Dmath_2 = \Dmath_{0,z_2}$. Thus, we have shown that $\Dmath=\Dmath_{0,z_2}+\mathrm{ad}_{t_{2}}+\mathrm{ad}_{t_{1}}+\Dmath_{z_1,0}=\mathrm{ad}_{t_{1}+t_{2}}+\Dmath_{z_1,z_2}$ as required.
\end{proof}

\subsection{Derivations of $\Bbbk[h^{\pm 1}](\sigma_{inv},a)$}
In this section we will determine the derivations of $\Bbbk[h^{\pm 1}](\sigma_{inv},a)$ where $\sigma_{inv}(h)=qh^{-1}$ for $q\in\Bbbk^*$.

\begin{proposition}\label{profga3}
Every derivation of $\Bbbk[h^{\pm 1}](\sigma_{inv},a)$ can be uniquely written as $\mathrm{ad}_t+\Dmath_{z_1,z_2}$, where $t\in\mathcal{T}_q$ and $\Dmath_{z_1,z_2}$ are defined as in Lemma \ref{specialder21}.
\end{proposition}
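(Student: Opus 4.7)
The plan is to reduce to Proposition \ref{Prop1cx} via the Ore localization of Lemma \ref{loca989}. Given a derivation $\textrm{D}$ of $A := \Bbbk[h^{\pm 1}](\sigma_{inv},a)$, it extends uniquely to a derivation $\widetilde{\textrm{D}}$ of the localization $L := A\Sigma^{-1}$, which by Lemma \ref{loca989} is isomorphic to an algebra of the form $\Bbbk[h^{\pm 1}](\sigma_{inv}',1)$. Proposition \ref{Prop1cx} then supplies a decomposition $\widetilde{\textrm{D}} = \mathrm{ad}_s + \Dmath_{w_1,w_2}$ with $s \in L$ and $w_1,w_2 \in Z(L)$; the remaining work is to refine this so that the adjoint part uses some $t \in A$ and the outer part uses $z_1,z_2 \in Z(A)$.

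First I would pin down the outer term. By Lemma \ref{fsddf5367}, $Z(A)$ embeds into $Z(L)$, and its image inside $\Bbbk[h+qh^{-1},x^{\pm 2}]$ is characterised by divisibility of the $x^{-2k}$-coefficients by the relevant products of $\sigma_{inv}$-shifts of $a(h)$, these being precisely the coefficients needed in order to rewrite $x^{-2k}$ as a $y^{2k}$-monomial of $A$. Applying the grading of Section \ref{grading1} to the conditions $\widetilde{\textrm{D}}(h),\widetilde{\textrm{D}}(x),\widetilde{\textrm{D}}(y) \in A$, and invoking Lemma \ref{lemma1.2poliden1} to separate the $(h^2-1)w_1$ contribution from the $\mathrm{ad}_s(h)$ contribution in each homogeneous slice, one forces $w_1,w_2$ to satisfy these divisibility constraints and hence to lie in $Z(A)$.

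The main obstacle is to show that $s$ can be replaced by some $t \in A$ defining the same inner derivation. Decompose $s = s_+ + s_-$ along the $\mathbb{Z}$-grading, with $s_+$ comprising the summands already representable in $A$ and $s_-$ collecting the genuinely localisation-specific negative-$x$ terms. Having subtracted the already-understood pieces, one has $\mathrm{ad}_{s_-}(A) \subseteq A$, and evaluating on $h^j$ for a range of $j$ produces a linear system in the coefficients of $s_-$. Unlike the geometric-progression system in the proof of Proposition \ref{K1}, here the two-cycle action of $\sigma_{inv}$ means the coefficient matrix is built from differences of the form $h - \sigma_{inv}^i(h)$, alternating between $0$ and $h - qh^{-1}$; the Laurent polynomial identities of Section 3.5, in particular Lemmas \ref{newlem33} and \ref{lemsymanti}, are precisely what is needed to show that this system admits only trivial solutions modulo $Z(L)$.

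Any residual central term absorbed from $s_-$ into $w_2$, one sets $t := s_+$ to obtain the stated decomposition $\textrm{D} = \mathrm{ad}_t + \Dmath_{z_1,z_2}$ with $t \in A$ and $z_i \in Z(A)$; uniqueness descends from the uniqueness statement of Proposition \ref{Prop1cx} together with $Z(A) \hookrightarrow Z(L)$. I expect the hardest step to be the linear-algebra argument of the third paragraph, because the symmetry conditions satisfied by elements of $Z(L)$ (such as $p(h)=p(qh^{-1})$), combined with the divisibility conditions that cut out $A$ inside $L$, must be tracked simultaneously through the evaluation of $\mathrm{ad}_{s_-}$ on a well-chosen test family.
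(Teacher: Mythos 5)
Your overall route --- extend $\Dmath$ to the localization via Lemma \ref{loca989}, apply Proposition \ref{Prop1cx} there, then descend --- is exactly the paper's. Where you diverge is the descent step, and your version is both heavier and shakier than what the paper does. The paper simply observes that the decomposition supplied by Proposition \ref{Prop1cx} is already written graded-slice by graded-slice in a basis compatible with that of $\Bbbk[h^{\pm 1}](\sigma_{inv},a)$, so the requirement that each homogeneous component of $\Dmath(h)$, $\Dmath(x)$, $\Dmath(y)$ land in $\Bbbk[h^{\pm 1}](\sigma_{inv},a)$ lets one choose $t$, $z_1$, $z_2$ there directly; no analogue of the invertible-matrix argument of Proposition \ref{K1} is set up. Your proposed linear system is degenerate exactly where you say it is: $\sigma_{inv}$ has order two, so $\mathrm{ad}_{s_-}(h^j)$ carries no information whatsoever about the even-graded components of $s_-$, and evaluating on powers of $h$ alone cannot close the argument. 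Moreover, Lemmas \ref{newlem33} and \ref{lemsymanti} concern the symmetric/antisymmetric splitting of Laurent polynomials under $h\mapsto h^{-1}$; they are already consumed inside the proof of Proposition \ref{Prop1cx} and say nothing about the divisibility conditions that cut $\Bbbk[h^{\pm 1}](\sigma_{inv},a)$ out of its localization, so invoking them to show the degenerate system has ``only trivial solutions modulo $Z(L)$'' is an unsubstantiated leap. To rescue that paragraph you would have to evaluate $\mathrm{ad}_{s_-}$ on $x$ and $y$ as well and track divisibility by the relevant products of $\sigma_{inv}$-shifts of $a$ explicitly --- at which point you are reproving, laboriously, the basis compatibility the paper asserts in one sentence.

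You also omit the second half of the paper's proof: once $z_1,z_2\in Z(\Bbbk[h^{\pm 1}](\sigma_{inv},a))$, the paper applies $\Dmath_{z_1,z_2}$ to the relation $yx=a(h)$, computes $\Dmath_{z_1,z_2}(a(h))$ from $\Dmath_{z_1,z_2}(h)=(h^2-1)z_1$, and deduces
$$(h^{2}-1)z_1\left[\sum_{i=1}^d i a_i h^{i-1}-\sum_{i=1}^e i a_i' h^{-i-1}\right]=0,$$
hence $z_1=0$ when $a$ is not a monomial. This is not needed for the bare existence of the decomposition, but it is the content of Proposition \ref{Prop_main_2}(2) and the paper extracts it here; your sketch never tests the decomposition against $yx=a(h)$ at all.
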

\begin{proof}
By Lemma \ref{loca989} we have that every derivation $\Dmath$ of $\Bbbk[h^{\pm 1}](\sigma_{inv},a)$ uniquely extends to a derivation of $\Bbbk[h^{\pm 1}](\sigma_{inv},1)$. Thus, by Proposition \ref{Prop1cx} we have that $\Dmath=\mathrm{ad}_t+\Dmath_{z_1,z_2}$, where $t\in \Bbbk[h^{\pm 1}](\sigma_{inv},1)$ and $z_1,z_2\in Z(\Bbbk[h^{\pm 1}](\sigma_{inv},1)).$ Since the basis in which we have written the action of a derivation of $\Bbbk[h^{\pm 1}](\sigma_{inv},1)$ is compatible with the basis of $\Bbbk[h^{\pm 1}](\sigma_{inv},a)$, we can see that all the components of $\Dmath$, for example $\mathrm{ad}_t(h)+\Dmath_{z_1,z_2}(h),$ belong to $\Bbbk[h^{\pm 1}](\sigma_{inv},a)$ implying $t, z_1,$ and $z_2$ can all be chosen from $\Bbbk[h^{\pm 1}](\sigma_{inv},a)$. 

It remains to show that $\Dmath_{z_1,z_2}$ is consistent on the relations of $\Bbbk[h^{\pm 1}](\sigma_{inv},a)$. We first check $yx=a(h)$ and $xy=a(h^{-1})$. Thus, we consider 
$\textrm{D}(yx)=\textrm{D}(a(h)).$
By noting that 
\begin{align*}
\textrm{D}(yx)&=\textrm{D}(y)x+y\textrm{D}(x)\\
&=-z_2yx+y z_2 x\\
&=0
\end{align*}
we need only consider $\textrm{D}(a(h))=0$. Substituting for $a(h)$ we get
\begin{equation}\label{eqatsub3}
\textrm{D}(a(h))=\displaystyle\sum_{i=1}^d a_i \textrm{D}(h^i)+\textrm{D}(a_0)+\displaystyle\sum_{i=1}^e a_i' \textrm{D}(h^{-i}).
\end{equation}
Using the identities, which follow from general results pertaining to derivations (see \cite[Chapter III, Section 10]{Bourbaki}),
$$\textrm{D}(h^i)=\displaystyle\sum_{j=1}^i h^{j-1}\textrm{D}(h)h^{i-j}~~\textrm{and}~~\textrm{D}(h^{-i})=\displaystyle\sum_{j=1}^i h^{-j+1}\textrm{D}(h^{-1})h^{-i+j},$$
we rewrite the right hand side of Equation (\ref{eqatsub3}) as
\begin{equation}\label{actiona2}
\displaystyle\sum_{i=1}^d a_i \displaystyle\sum_{j=1}^i h^{j-1}\textrm{D}(h)h^{i-j} + \displaystyle\sum_{i=1}^e a_i' \displaystyle\sum_{j=1}^i h^{-j+1}\textrm{D}(h^{-1})h^{-i+j}.
\end{equation}
Using the identity
$\textrm{D}(h^{-1})=-h^{-1}\textrm{D}(h)h^{-1}$ and that $\textrm{D}=(h^2-1)z_1$ we get
\begin{equation}\label{actiona3}
\displaystyle\sum_{i=1}^d a_i \displaystyle\sum_{j=1}^i h^{i-1}(h^{2}-1)z_1 - \displaystyle\sum_{i=1}^e a_i' \displaystyle\sum_{j=1}^ih^{-i-1}(h^{2}-1)z_1.
\end{equation}
By simplifying and recalling that $\textrm{D}(a(h))=0$, we get 
$$(h^{2}-1)z_1
\left[ 
\displaystyle\sum_{i=1}^d i a_i h^{i-1} -
\displaystyle\sum_{i=1}^e i a_i' h^{-i-1}
\right]=0.$$
This implies that $z_1=0$. Using this, we can directly show that $\textrm{D}$ preserves $xh=h^{-1}x, yh=h^{-1}y$, and $xy=a(qh^{-1})$.
\end{proof}
\begin{remark}
Proposition \ref{Prop_main_2} follows as a corollary of Proposition \ref{Prop1cx} and Proposition \ref{profga3}. 
\end{remark}

\ \\ \hspace{1cm}
\begin{minipage}[c]{\linewidth}
~\\
\noindent 
Andrew P. Kitchin \\
{\tt andrew.p.kitchin@gmail.com} \\

\end{minipage}

\end{document}